\setlist[enumerate]{topsep=0pt,itemsep=-1ex,partopsep=1ex,parsep=1ex}
\theoremstyle{plain}
\newtheorem{theorem}{Theorem}
\newtheorem{lemma}[theorem]{Lemma}
\newtheorem{claim}[theorem]{Claim}
\newtheorem{proposition}[theorem]{Proposition}
\newtheorem{conjecture}[theorem]{Conjecture}
\newcommand{\floor}[1]{\lfloor #1 \rfloor}
\title{On the length of directed paths in digraphs}
\author{Yangyang Cheng$^{a,}$\unskip\thanks{\emph{E-mail address:} Yangyang.Cheng@maths.ox.ac.uk} \unskip\thanks{Supported by a PhD studentship of ERC Advanced Grant 883810.},
Peter Keevash$^{a,}\unskip\thanks{Supported by ERC Advanced Grant 883810.}$
\\ \\[.5em]
{\small $^a$Mathematical Institute, University of Oxford, UK}\\}
\date{}
\begin{document}
\maketitle

\begin{abstract}
Thomass\'{e} conjectured the following strengthening of the well-known Caccetta-Haggkvist Conjecture:
any digraph with minimum out-degree $\delta$ and girth $g$ contains a directed path of length $\delta(g-1)$.
Bai and Manoussakis \cite{Bai} gave counterexamples to Thomass\'{e}'s conjecture for every even $g\geq 4$. 
In this note, we first generalize their counterexamples to show
that Thomass\'{e}'s conjecture is false for every $g\geq 4$. 
We also obtain the positive result that any digraph with minimum 
out-degree $\delta$ and girth $g$ contains a directed path of $2(1-\frac{2}{g})$. 
For small $g$ we obtain better bounds, e.g.~for $g=3$ we show that
oriented graph with minimum out-degree $\delta$ contains a directed path of length $1.5\delta$.
Furthermore, we show that each $d$-regular digraph with  girth $g$ 
contains a directed path of length $\Omega(dg/\log d)$. 
Our results give the first non-trivial bounds for these problems.
\end{abstract}


\section{Introduction}

The Caccetta-Haggkvist Conjecture \cite{caccetta1978minimal}
states that any digraph on $n$ vertices with minimum out-degree $\delta$ 
contains a directed cycle of length at most $\lceil n/\delta \rceil$;
it remains largely open (see the survey \cite{Sul}).
A stronger conjecture proposed by Thomass\'{e} (see \cite{Bang,Sul}) states that 
any digraph with minimum out-degree $\delta$ and girth $g$ contains a directed path of length $\delta(g-1)$.
Bai and Manoussakis \cite{Bai} gave counterexamples to Thomass\'{e}'s conjecture for every even $g\geq 4$. 
The conjecture remains open for $g=3$, which in itself was highlighted as an unsolved problem
in the textbook  \cite{GT}. 

\begin{conjecture}
Any oriented graph with minimum out-degree $\delta$ contains a directed path of length $2\delta$.
\end{conjecture}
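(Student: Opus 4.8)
The plan is to attack this (the $g=3$ case of Thomass\'{e}'s conjecture, and the hardest one) by a longest-path argument reinforced with induction on the number of vertices. Let $D$ be an oriented graph with $\delta^+(D)\ge\delta$, let $P=v_0v_1\cdots v_\ell$ be a longest directed path, and suppose for contradiction that $\ell\le 2\delta-1$; among all counterexamples choose one minimising $|V(D)|$. First I would record the standard terminal structure: every out-neighbour of $v_\ell$ lies on $P$ (otherwise $P$ extends), and since $D$ has no digon we have $v_{\ell-1},v_\ell\notin N^+(v_\ell)$, so $N^+(v_\ell)\subseteq\{v_0,\dots,v_{\ell-2}\}$ and in particular $\ell\ge\delta+1$.

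Next I would use minimality of $|V(D)|$ to handle the case where a back-edge of $v_\ell$ reaches near $v_0$. If $v_\ell\to v_0$ then $v_0v_1\cdots v_\ell v_0$ is a Hamilton cycle of $D[V(P)]$; writing $W=V(D)\setminus V(P)$, we have $W\ne\emptyset$ because $\delta^+(D)\ge\delta$ forces $|V(D)|\ge 2\delta+1>\ell+1$. Now either some $w\in W$ has an out-neighbour on this cycle, in which case running once around the cycle from that neighbour gives a directed path of length $\ell+1$, contradicting maximality; or every vertex of $W$ sends all its out-edges into $W$, so $\delta^+(D[W])\ge\delta$ and, as $D[W]$ is a smaller instance, it contains a directed path of length $2\delta\ge\ell+1$ by the choice of $D$ --- again a contradiction. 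Hence $v_\ell\not\to v_0$; the same interplay between rotations and absorbing the leftover set $W$ recurs throughout, and analogous moves give further restrictions on where the earliest out-neighbour of $v_\ell$ can lie.

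The heart of the argument is then a rotation-and-counting step. Let $i$ be minimal with $v_\ell\to v_i$; since $N^+(v_\ell)\subseteq\{v_i,\dots,v_{\ell-2}\}$ has at least $\delta$ elements, the cycle $C=v_iv_{i+1}\cdots v_\ell v_i$ has length at least $\delta+2$, and $Q=v_0\cdots v_{i-1}$ enters $C$ through the edge $v_{i-1}\to v_i$. Any edge of $D$ inside $V(P)$ that permits re-routing $P$ into another longest path ending at a new vertex $u$ enlarges a set $S$ of reachable endpoints, and each such $u$ again satisfies $N^+(u)\subseteq V(P)$, $|N^+(u)|\ge\delta$, with no self-loop or digon. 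The goal is to show these constraints are mutually incompatible when $\ell\le 2\delta-1$: roughly, to each $u\in S$ one attaches the interval of $P$ spanned by its back-edges, argues via the rotations that these intervals sweep across $P$, and then double-counts against $\sum_v\delta^+(v)\ge\delta\,|V(P)|$ to force $\ell\ge 2\delta$. The regular tournament on $2\delta+1$ vertices, whose Hamilton path meets this bound with equality, shows that the counting must be tight.

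The main obstacle, I expect, is exactly this last step: controlling how the back-edges of the many rotated endpoints interleave. In the extremal tournament they interleave as tightly as possible, so any workable counting inequality has to be sharp there while still gaining in every other configuration --- and this is precisely where the known partial results (e.g.\ the $1.5\delta$ bound for $g=3$ proved earlier in this note) run out of room. A plausible way around it is a structural dichotomy: either $D$ contains a ``tournament-like'' piece on $\Theta(\delta)$ vertices, where one appeals to Hamiltonicity of near-regular tournaments, or $D$ is ``spread out'' enough that a depth-first-search / level-function argument produces the path directly; making the two regimes meet exactly at the threshold $2\delta$ is the crux, and is no doubt why the conjecture is still open.
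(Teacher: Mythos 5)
The statement you are addressing is not a theorem of the paper at all: it is Conjecture~1, the $g=3$ case of Thomass\'{e}'s conjecture, which the paper explicitly records as open and for which it proves only the weaker bound $1.5\delta$ (Theorem~\ref{thm2}, via the key Lemma~\ref{lem1} combined with the Hladk\'{y}--Kr\'{a}l--Norin triangle bound). So there is no ``paper proof'' to match, and your text does not close the gap either: it is a programme rather than a proof. The preliminary steps you carry out are sound and are essentially the same longest-path machinery the paper uses in Section~4 --- all out-neighbours of the terminal vertex lie on $P$, hence $\ell\ge\delta+1$; the minimal-index back-edge gives a terminal cycle $C$ of length at least $\delta+2$; the minimal-counterexample reduction in the $v_\ell\to v_0$ case is correct (if no vertex of $W$ sends an arc to the spanning cycle of $D[V(P)]$ then $\delta^+(D[W])\ge\delta$ and minimality applies). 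But these observations only ever yield bounds of the type the paper already obtains; nothing in them forces $\ell\ge 2\delta$.

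The genuine gap is exactly the step you label ``rotation-and-counting'': you never specify the intervals attached to the rotated endpoints, never state the double-counting inequality against $\sum_v d^+(v)\ge\delta|V(P)|$, and never verify that it is simultaneously tight on the regular tournament on $2\delta+1$ vertices and strictly gaining elsewhere --- you yourself concede this is ``precisely where the known partial results run out of room'' and ``why the conjecture is still open.'' The concluding dichotomy (tournament-like piece versus spread-out DFS argument) is likewise only asserted, with no criterion separating the two regimes and no argument in either one. As it stands, the proposal establishes nothing beyond what the paper's Claims~\ref{clm1}--\ref{claim2} and Lemma~\ref{lem1} already give (namely $\ell(D)\ge 1.5\delta$ for oriented graphs), and the conjecture remains unproven by it.
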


In this note, we first generalize the counterexamples to show
that Thomass\'{e}'s conjecture is false for every $g\geq 4$. 

\begin{proposition}\label{counter}
For every $g\geq 2$ and $\delta \geq 1$ there exists a digraph $D$ with girth $g$ and $\delta^+(D)\geq \delta$
such that any directed path has length at most $\frac{g \delta +g-2}{2}$ if $g$ is even or $\frac{(g+1)\delta+g-3}{2}$ if $g$ is odd.
\end{proposition}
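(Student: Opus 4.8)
The plan is to construct, for every $g\ge 2$ and $\delta\ge 1$, an explicit digraph $D=D_{g,\delta}$ with girth exactly $g$, minimum out-degree at least $\delta$, and longest directed path of exactly the stated length, generalizing the even-girth examples of Bai and Manoussakis~\cite{Bai}. I would build $D$ as a \emph{layered} digraph: its vertex set is partitioned into layers $L_0,\dots,L_{N-1}$ with $N\ge g$, and every arc is either a \emph{forward} arc from some $L_i$ to $L_{i+1}$ or a \emph{return} arc from some $L_i$ to $L_{i-(g-1)}$. Since $-(g-1)\equiv 1\pmod g$, mapping each vertex of $L_i$ to $i\bmod g$ is a homomorphism from $D$ to the directed $g$-cycle, so the girth of $D$ is automatically at least $g$. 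The layer sizes are to be chosen so that (a) every vertex receives at least $\delta$ out-neighbours and (b) every directed cycle of $D$ is forced to meet a layer belonging to a distinguished ``bottleneck'' family, all of whose layers have size about $\delta/2$. Property (b) is the point of the design: a directed path can traverse a given directed cycle at most as many times as the smallest layer along that cycle allows, so a path can make progress through $D$ only at the rate of roughly one layer per two that it uses, and this is exactly what degrades Thomass\'e's target $\delta(g-1)$ to something near $\delta g/2$. For $g=2$ this should degenerate to the complete digraph on $\delta+1$ vertices and for $g=3$ to the circulant on $\mathbb Z_{2\delta+1}$ with connection set $\{1,\dots,\delta\}$, where the claimed bound already holds with equality; for odd $g$ in general I expect a small variant of an even construction, which is what the slightly different odd formula $\tfrac{(g+1)\delta+g-3}{2}$ is pointing to.

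Granting such a $D$, the girth and the out-degree are routine. The homomorphism above gives girth at least $g$, and exhibiting a single directed $g$-cycle (formed by $g-1$ forward arcs together with one return arc) pins it down to exactly $g$. The minimum out-degree is a short case analysis over the constantly many types of layer, using the chosen layer sizes.

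The real content is the upper bound on the length of a longest directed path $P$. I would analyse how $P$ meets the layers: writing $a_i$ for the number of vertices of $P$ in $L_i$, the fact that every arc of $P$ advances the layer index by $1$ modulo $g$ forces consecutive $a_i$ to differ by a bounded amount, so all the $a_i$ lie within an additive constant of a common value $q$; the bottleneck layers then force $q\le \delta/2+O(1)$, and as there are $\Theta(g)$ layers this already gives $|P|=\sum_i a_i$ of the right order of magnitude. The main obstacle is promoting this to the \emph{exact} bound: one must drive the $O(1)$ error terms to zero, correctly account for the two endpoints of $P$ and for the parities of $g$ and $\delta$, and --- most delicately --- check that $P$ cannot gain extra length by lapping one of the $g$-cycles more times than its bottleneck layer permits. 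This bookkeeping is exactly why the statement carries such awkward explicit constants and a parity split. I would finish by verifying the base cases $g=2$ and $g=3$ directly, where both the construction and the bound can be checked by hand.
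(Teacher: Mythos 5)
There is a genuine gap: what you have written is a design brief, not a proof. The construction is never actually specified --- the layer sizes, the placement of the ``bottleneck'' layers, and the arc pattern are all left ``to be chosen'', the verification of $\delta^+(D)\ge\delta$ is waved off as ``a short case analysis'', and the exact longest-path bound, which you yourself identify as ``the real content'', is deferred to bookkeeping that is never carried out. Moreover, the one quantitative mechanism you do describe does not obviously work inside your own framework. If $D$ admits a homomorphism to the directed $g$-cycle, then \emph{all} out-neighbours of a vertex in residue class $j$ lie in class $j+1$, so every residue class must contain at least $\delta$ vertices; consequently the argument ``the counts $a_i$ of $P$ in the classes differ by $O(1)$, and the bottleneck forces the common value to be about $\delta/2$'' cannot be run at the level of residue classes, and at the level of individual layers you must rule out the path routing its visits to a class through that class's large layers while avoiding the small one. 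Nothing in the proposal shows that a digraph with these properties exists, nor that, if it does, its longest path meets the exact values $\frac{g\delta+g-2}{2}$ and $\frac{(g+1)\delta+g-3}{2}$; the odd case is explicitly left as ``a small variant I expect''. Checking $g=2$ and $g=3$ by hand does not touch the cases $g\ge 4$, which are the whole point of the proposition.

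For comparison, the paper's construction is both concrete and simpler: start from $\overrightarrow{K}_{\delta+1}$ and ``lift'' each vertex, i.e.\ replace its out-arcs by a directed chain of $k-1$ new layers of size $\delta$ ending at its old out-neighbourhood, using an $a$-lift at one vertex and $b$-lifts elsewhere with $a\le b$ and $a+b=g$. Every out-degree stays exactly $\delta$ by construction; every directed cycle must contain at least two full ``segments'' (chains between original vertices), giving girth exactly $a+b=g$ without any homomorphism argument; and a directed path decomposes into at most $\delta+1$ segments, each of length at most $b$ (at most $a$ for the distinguished vertex, and not all can be full), giving the exact bound $\delta b+a-1$. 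The bottleneck there is the set of $\delta+1$ original vertices --- singletons, not layers of size $\delta/2$ --- which is what makes the exact count a two-line argument rather than the delicate error-driving you anticipate. If you want to salvage your approach, the most promising move is to recognise your ``bottleneck family'' as a set of $\delta+1$ single vertices and your forward structure as blown-up chains between them, at which point you have rediscovered the lift construction.
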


In the positive direction, when $g$ is large we can find a directed path of length close to $2\delta$.

\begin{theorem}\label{thm1}
Every digraph $D$ with girth $g$ and $\delta^+(D)\geq \delta$ contains a directed path of length $2\delta(1-\frac{1}{g})$.
\end{theorem}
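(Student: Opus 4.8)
The plan is to take a longest directed path $P=v_0v_1\cdots v_\ell$ in $D$ and to show $\ell\ge 2\delta\bigl(1-\tfrac1g\bigr)$. The starting point is the usual consequence of maximality: every out-neighbour of the terminal vertex $v_\ell$ already lies on $P$, so they form a set $\{v_{a_1},\dots,v_{a_k}\}$ with $a_1<\cdots<a_k$ and $k\ge\delta$. Each edge $v_\ell\to v_{a_i}$ closes the directed cycle $v_{a_i}v_{a_i+1}\cdots v_\ell v_{a_i}$, of length $\ell-a_i+1$, so the girth hypothesis gives $a_i\le\ell-g+1$ for all $i$; since the $a_i$ are distinct this forces $\delta-1\le a_k\le\ell-g+1$, i.e.\ $\ell\ge\delta+g-2$. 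One checks that $\delta+g-2\ge 2\delta(1-\tfrac1g)$ whenever $g\ge\delta$ or $g=2$, so I may henceforth assume $3\le g<\delta$.

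Next I would extract a long cycle sitting at the end of $P$. Applying the argument above to the \emph{smallest} out-neighbour $v_{a_1}$ instead: the $\delta$ out-neighbours of $v_\ell$ occupy $\delta$ distinct positions in $\{a_1,a_1+1,\dots,\ell-g+1\}$, hence $\ell-a_1\ge\delta+g-2$ and the directed cycle $C:=v_{a_1}v_{a_1+1}\cdots v_\ell v_{a_1}$ has length $|C|\ge\delta+g-1$. Thus $P$ decomposes as a pendant sub-path $v_0\cdots v_{a_1}$ of length $a_1$ followed by the long cycle $C$, with $\ell=a_1+|C|-1$. If $a_1\ge\delta-g+1$ then $\ell\ge 2\delta-1\ge 2\delta(1-\tfrac1g)$ and we are finished; so I may also assume $a_1\le\delta-g$, which means $C$ accounts for all but a short initial stretch of $P$.

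The remaining step, which I expect to be the crux, is to convert the length $|C|-1\ge\delta+g-2$ already present in $C$ into a full path of length $2\delta(1-\tfrac1g)$ by attaching to $C$ a further path of length about $\delta$. Since every vertex of $C$ has out-degree $\ge\delta$ and $|C|>g$, the cycle $C$ carries many chords; I would classify the out-edges of a vertex $v_j\in C$ — those landing again in $C$ create short directed cycles inside $C$ and so are confined to a bounded range of positions exactly as above, while an out-edge of $v_j$ leaving $V(P)$ produces a competitor path $v_0\cdots v_j w\cdots$ whose length is capped by maximality of $P$, pinning down where it can go. The target is to locate a directed path of length $\approx\delta$ meeting $V(C)$ only at its last vertex and splice it onto a Hamilton path of $C$, giving length $|C|-1+\Theta(\delta)\ge 2\delta(1-\tfrac1g)$; failing that, one should be able to pass to a strictly smaller induced sub-digraph that still has minimum out-degree $\delta$ and girth $g$ — a sink strongly connected component, or $D[V(C)]$ if $V(C)$ is proper and closed under out-neighbours — and conclude by induction on $|V(D)|$.

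The delicate point, and where the real work should lie, is the bookkeeping for out-edges leaving $V(P)$: such an edge does not on its own contradict maximality of $P$, and a vertex outside $C$ may send as many as $\delta$ edges into $C$, so $D-V(C)$ need not retain minimum out-degree $\delta$. The argument therefore has to be set up so that the $\delta+g-2$ units of length banked from $C$ precisely offset the out-degree lost on deleting $C$ (and one must separately rule out, or handle, the case where $P$ is Hamiltonian and no reduction is available); it is this balancing of banked length against lost out-degree that I expect to produce the coefficient $2(1-\tfrac1g)$.
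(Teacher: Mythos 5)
Your first two paragraphs are fine: the bounds $\ell\ge\delta+g-2$, the cycle $C$ at the end of $P$ with $|C|\ge\delta+g-1$, and the reductions to $3\le g<\delta$ and $a_1\le\delta-g$ are all correct elementary consequences of maximality and the girth hypothesis. But from that point on the proposal is a plan rather than a proof, and the plan does not contain the idea that actually makes the theorem work. You yourself flag the obstruction: an out-edge of $v_j$ leaving $V(P)$ contradicts nothing, deleting $V(C)$ can destroy the minimum out-degree (a vertex off $C$ may send all $\delta$ of its out-edges into $C$), and the hoped-for path of length $\approx\delta$ meeting $C$ only in its last vertex is exactly what one cannot in general exhibit directly. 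Neither the splicing step nor the fallback induction on a sink strong component or on $D[V(C)]$ is justified, so the coefficient $2(1-\tfrac1g)$ is never actually produced; the argument as written only yields $\ell\ge\delta+g-2$ (plus the two easy boundary cases).

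For comparison, the paper does not try to lengthen $P$ at all beyond a certain point. It proves a structural key lemma (Lemma \ref{lem1}): choosing a longest path that also maximizes the ``cycle bound'' $|C|$, and analysing the out-neighbourhood of the vertex $v_{a-1}$ just before the cycle, one shows that the set $B^-$ of predecessors on $C$ of the out-neighbours of $v_{a-1}$ in $C$ spans an induced subgraph $S$ with $|S|\le\delta$ and $\delta^+(S)\ge 2\delta+1-\ell(D)$ (the point being $N^+(B^-)\subseteq V(C)$, so a long path or a bigger cycle bound would otherwise arise). The extra length is then obtained not by construction but by contradiction with the girth: applying the Chv\'atal--Szemer\'edi bound (Theorem \ref{thm0}) to $S$ forces a directed cycle of length at most $\frac{2\delta}{2\delta-\ell(D)+1}$, which must be at least $g$, giving $\ell(D)\ge 2\delta(1-\frac1g)+1$. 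This imported Caccetta--H\"aggkvist-type input is precisely what converts ``small dense leftover'' into the factor $2(1-\frac1g)$; your proposal has no substitute for it, and that is the genuine gap.
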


For the cases $g=3$ or $g=4$, we have the following better bounds.

\begin{theorem}\label{thm2}
Every oriented graph $D$ with $\delta^+(D)\geq \delta$ contains a directed path of length $1.5\delta$. 
Every digraph $D$ with $\delta^+(D)\geq \delta$ and girth $g\geq 4$ contains a directed path of length $1.6535\delta$.
\end{theorem}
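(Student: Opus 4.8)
My plan is to prove Theorem~\ref{thm2} by a more careful version of the longest‑path argument that underlies Theorem~\ref{thm1}. Take a longest directed path $P=v_0v_1\cdots v_\ell$ in $D$; it suffices to bound $\ell$ from below, and we may assume $\delta$ is large, since for small $\delta$ the conclusion already follows from the elementary bound $\ell\ge\delta+g-2$ noted below. Because $P$ is maximal, $N^+(v_\ell)\subseteq V(P)$; and because $D$ has girth $g$, every edge $v_\ell\to v_i$ closes the cycle $v_iv_{i+1}\cdots v_\ell v_i$ of length $\ell-i+1\ge g$, so that $N^+(v_\ell)\subseteq\{v_0,\dots,v_{\ell-g+1}\}$, giving $\ell\ge\delta+g-2$. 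The first point is that this confinement propagates: the endpoint of \emph{any} longest path obeys it, and moreover if a near‑end vertex such as $v_{\ell-1}$ has an out‑neighbour $w\notin V(P)$, then $v_0\cdots v_{\ell-1}w$ is itself a longest path, so $N^+(w)\subseteq\{v_0,\dots,v_{\ell-g+1}\}$ as well (it avoids $w$, avoids $v_{\ell-1}$ since $D$ has no digon, and any edge $w\to v_i$ closes a cycle of length $\ell-i+1\ge g$), while no two such off‑path vertices can be adjacent, else $P$ extends.

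The plan is then to exploit this confinement by producing many longest paths whose endpoints are scattered but whose terminal out‑neighbourhoods are all crammed into essentially the same short block. Concretely, assuming for contradiction that $\ell$ is below the target, I would perform a bounded sequence of re‑routings of $P$ along chords — each such operation preserves the vertex set and the length of the path but moves its endpoint — so as to obtain longest paths with pairwise distinct endpoints $u_1=v_\ell,u_2,\dots,u_t$, each of whose $\ge\delta$ out‑neighbours is forced into (more or less) the same initial block $\{v_0,\dots,v_{\ell-g+1}\}$; if some re‑routing ever exposes an endpoint with an out‑neighbour outside the current vertex set, we immediately get a directed path of length $\ell+1$, a contradiction. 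Counting incidences between these $t$ out‑neighbourhoods and the short block then pushes $\ell$ up to the claimed value.

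The two constants come out of optimizing this count. In the oriented case the only structural input is the absence of digons, and balancing the number $t$ of re‑routings against the available slack yields the threshold $\ell=\tfrac32\delta$. For girth $g\ge4$ there are two improvements: backward chords reach one step further along $P$, and — more importantly — no two of the re‑routing chords can form a triangle with an edge of $P$, which limits how much the $t$ out‑neighbourhoods can overlap inside the block; re‑optimizing raises the threshold to $1.6535\delta$, where the number $1.6535$ is the numerical solution of the resulting extremal problem in $t$ and the stopping point. (For $g\ge6$ Theorem~\ref{thm1} already gives $\tfrac53\delta>1.6535\delta$, so only $g\in\{4,5\}$ genuinely needs this refinement.)

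The main obstacle is making the middle steps precise: one must check that the re‑routings can be chosen so that the endpoints $u_1,\dots,u_t$ are genuinely new (rather than cycling back through paths already used) and that their out‑neighbourhoods remain controlled throughout, and then carry out the incidence optimization cleanly, including the bookkeeping of overlaps. The girth‑$\ge4$ no‑triangle observation is exactly the ingredient that turns the constant $\tfrac32$ into $1.6535$, so pinning down the optimal $t$ together with a tight overlap estimate is the delicate part of the argument.
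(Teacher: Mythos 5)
There is a genuine gap: what you have written is a plan, not a proof, and the part that is missing is precisely the part that produces the two constants. Your first paragraph (confinement of $N^+(v_\ell)$ to $\{v_0,\dots,v_{\ell-g+1}\}$, hence $\ell\ge\delta+g-2$) is correct but elementary; everything after that --- the ``bounded sequence of re-routings'', the claim that the resulting endpoints have out-neighbourhoods ``crammed into essentially the same short block'', and above all the ``incidence counting'' that is supposed to push $\ell$ up to $1.5\delta$ and $1.6535\delta$ --- is asserted, not carried out. You acknowledge this yourself (``the main obstacle is making the middle steps precise''), but those middle steps are the entire content of the theorem: no choice of $t$, no overlap estimate, and no optimization is actually exhibited, so nothing in the write-up certifies either constant. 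Moreover, the specific value $1.6535$ is very unlikely to fall out of any such elementary rotation/counting scheme: in the paper it is exactly $2-0.3465$, where $0.3465$ is the Hladk\'y--Kr\'al--Norin bound (Theorem~\ref{thm00}) for the triangle case of Caccetta--H\"aggkvist, a result proved by flag-algebra methods. Your sketch contains no substitute for that input, and the vague ``no two re-routing chords form a triangle with an edge of $P$'' observation is nowhere near strong enough to recover it.

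For comparison, the paper's route is quite different and much more economical: it first proves the key Lemma~\ref{lem1}, which says that if $\ell(D)<2\delta$ then $D$ contains an induced subgraph $S$ with $|S|\le\delta$ and $\delta^+(S)\ge 2\delta-\ell(D)$ (this is where the longest-path analysis, strong connectivity, and the ``cycle bound'' maximization live). The theorem then follows by feeding $S$ into known density statements: for an oriented graph some vertex of $S$ has out-degree at most $\tfrac{|S|-1}{2}\le\tfrac{\delta-1}{2}$, giving $\ell(D)\ge\tfrac32\delta$, and for girth $\ge4$ one applies Theorem~\ref{thm00} to $S$ to get $\delta^+(S)<0.3465\delta$, giving $\ell(D)>1.6535\delta$. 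If you want to salvage your approach, the realistic path is to show that your re-routing argument isolates such a small dense set $S$ (or re-derive Lemma~\ref{lem1}) and then invoke the same external bounds; trying to obtain $1.6535$ purely by optimizing a rotation count will not work.
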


Finally, we consider the additional assumption of approximate regularity,
under which a standard application of the Lov\'asz Local Lemma gives much better bounds,
We call a digraph $(C,d)$-\emph{regular} if $d^+(v)\geq d$ and $d^-(v)\leq Cd$ for each vertex $v$. 

\begin{theorem}\label{ldp}
For every $C>0$ there exists $c>0$ such that if $D$ is a $(C,d)$-regular digraph with girth $g$
then $D$ contains a directed path of length at least $cdg/\log d$.
\end{theorem}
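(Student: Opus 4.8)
\emph{Proof strategy.} The plan is to use the Lovász Local Lemma to impose a cyclic ``level'' structure on $D$, and then to mine a long directed path out of the resulting layered sub-digraph, the girth being what makes the path long. First I would reduce to the case $d^+(v)=d$ for every $v$: deleting out-arcs down to out-degree exactly $d$ only raises the girth and lowers in-degrees, so the hypotheses survive, and now $d^+(v)=d$, $d^-(v)\le Cd$. Fix a parameter $k$ (eventually of order $\log d$), colour each vertex $v$ independently and uniformly with a level $\ell(v)\in\mathbb{Z}_k$, call an arc $(u,v)$ \emph{good} if $\ell(v)\equiv\ell(u)+1\pmod k$, and let $D'$ be the spanning sub-digraph of good arcs. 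For each $v$ let $B_v$ be the event that fewer than $\delta':=\lfloor d/(2k)\rfloor$ of the $d$ out-arcs at $v$ are good; since each out-arc is good independently with probability $1/k$, a Chernoff bound gives $\Pr[B_v]\le e^{-c d/k}$ for an absolute constant $c>0$.

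Next I would run the Local Lemma. The event $B_v$ depends only on $\ell(v)$ and the levels of the out-neighbours of $v$, so $B_v$ is mutually independent of all $B_u$ except those $u$ with $(\{u\}\cup N^+(u))\cap(\{v\}\cup N^+(v))\ne\varnothing$; using $d^+\equiv d$ and $d^-\le Cd$ there are at most $1+d+Cd+Cd^2=O(Cd^2)$ such $u$. Hence, as soon as $e\cdot e^{-cd/k}\cdot O(Cd^2)\le 1$ — which holds comfortably once $k\le c_1 d/\log d$, and in particular for $k$ of order $\log d$ — there is a colouring avoiding every $B_v$. Fix one. Then $D'$ has minimum out-degree $\ge \delta'=\Theta(d/\log d)$, every arc of $D'$ goes from level $i$ to level $i+1$, so every directed closed walk of $D'$ has length divisible by $k$; and since $D'\subseteq D$ has girth $\ge g$, every directed cycle of $D'$ has length at least $k\lceil g/k\rceil\ge g$.

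The remaining task is to extract a directed path of length $\Omega(\delta' g)$ from such a $\mathbb{Z}_k$-layered digraph $D'$ (one admitting a homomorphism onto the directed $k$-cycle) with minimum out-degree $\ge\delta'$ and all directed cycles of length $\ge g$; with $\delta'=\Theta(d/\log d)$ this is exactly the path of length $\Omega(dg/\log d)$ we want. The idea is to pass to the unwinding $\widetilde D$ of $D'$, the pullback of $D'$ along the universal cover $\mathbb{Z}\to C_k$: an acyclic digraph whose vertices carry an integer ``height'', in which every vertex still has out-degree $\ge\delta'$ to the next height. A directed path in $\widetilde D$ from height $0$ projects to a walk in $D'$, and this walk is forced to be simple provided no two of its vertices at heights $\ge g$ apart coincide. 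Here the girth enters decisively: when the path has reached height $h$ at a vertex $v$, an out-neighbour of $v$ that already lies on the path must sit at a height that is both $\ge g$ below $h$ and $\equiv h+1\pmod k$ — otherwise it would close a directed cycle of $D'$ of length $<g$ — so the ``blocked'' out-neighbours are both few and spread out, one per $k$ heights and all far below. A careful accounting of this, exploiting that a vertex can recur along the winding path only after $\ge g$ further steps, is meant to show the construction keeps descending for $\Omega(\delta' g)$ heights; optimising $k$ against the Local Lemma constraint then lands at $k=\Theta(\log d)$ and a path of length $\Omega(dg/\log d)$.

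The main obstacle is precisely this extraction step. A naive greedy traversal — extend while some out-neighbour is unused — only yields a path of length about $\delta' k+g=\Theta(d+g)$, an \emph{additive} rather than multiplicative gain from the girth, which falls far short of $\Omega(dg/\log d)$ as soon as $g\gg\log d$. Converting this into the multiplicative bound requires genuinely using the layered structure to control how quickly blocked out-neighbours accumulate along the path (so that they cannot stop it before it has wound around $C_k$ roughly $g/k$ times), together with choosing $k$ so as to trade off the strength of the Local Lemma bound ($k\lesssim d/\log d$) against the length gained per unit of minimum out-degree; getting this balance right is the technical heart of the argument.
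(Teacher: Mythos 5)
Your Chernoff/Local Lemma step (levels mod $k$, keeping only arcs that climb one level) is sound as far as it goes, but the proof has a genuine gap exactly where you flag it: the ``extraction step'' is not a deferred technicality, it is the whole problem. After the colouring you have a digraph $D'$ with minimum out-degree $\delta'=\Theta(d/\log d)$, girth at least $g$, and a homomorphism onto the directed $k$-cycle, and you must find in it a directed path of length $\Omega(\delta' g)$. Up to the layering, that is precisely the statement $\ell(D)\ge c\,g(D)\,\delta^+(D)$, which is open in general (the paper poses it as a conjecture in its concluding remarks), and the cyclic level structure does not obviously help: the out-degree $\delta'$ can only be exploited to avoid previously used vertices, and your own accounting shows the natural greedy traversal of the unwinding stalls after about $\delta'k+g=\Theta(d+g)$ steps, since nothing prevents the walk from colliding with vertices visited a full winding (at least $g$ steps) earlier. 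The sketched ``careful accounting'' of blocked out-neighbours is never carried out, so the multiplicative bound $\Omega(dg/\log d)$ is not established.

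The paper sidesteps this difficulty by randomly partitioning \emph{vertices} rather than arcs by level: Chernoff plus the Local Lemma give a partition of $V(D)$ into $t=\lfloor c'd/\log d\rfloor$ disjoint parts $V_1,\dots,V_t$ such that every vertex has an out-neighbour in every part. Then each induced digraph $D[V_i]$ has minimum out-degree at least $1$, hence contains a directed path of length at least $g$ (a maximal path ends in a vertex all of whose out-neighbours lie on the path, closing a cycle of length at least $g$); one takes such a path in $V_1$, steps from its endpoint into $V_2$, takes a maximal path there, and so on. Because the parts are vertex-disjoint and each is used only once, the concatenation is automatically a simple path of length at least $tg=\Omega(dg/\log d)$: the disjointness performs exactly the anti-collision role that your $\mathbb{Z}_k$-layering cannot. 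To rescue your approach you would need to replace the $k\approx\log d$ cyclic levels by $\Theta(d/\log d)$ pairwise disjoint classes traversed each at most once, which is in effect the paper's construction.
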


\subsection{Notation}

We adopt standard notation as in \cite{Bang}.
A \emph{digraph} $D$ is defined by a vertex set $V(D)$ and arc set $A(D)$, which is a set of ordered pairs in $V(D)$.
An \emph{oriented graph} is a digraph where we do not allow $2$-cycles $\{(x,y),(y,x)\}$,
i.e.~it is obtained from a simple graph by assigning directions to the edges.
For each vertex $v\in D$ and any vertex set $S\subseteq V(D)$, 
let $N^+(v,S)$ be the set of out-neighbours of $v$ in $S$ and let $d^+(v,S)=|N^+(v,S)|$. 
If $S=V(D)$, then we simply denote $d^+(v,S)$ by $d^+(v)$. 
If $H$ is an induced subgraph of $D$, then we define $d^+(v,H)=d^+(v,V(H))$ for short. 
We let $\delta^+(D) = \min_v d^+(v)$ be the minimum out-degree of $D$.
Indegree notation is similar, replacing $+$ by $-$.

For every vertex set $X\subseteq V(D)$, 
let $N^+(X)$ be the set of vertices that are not in $X$ but are out-neighbours of some vertex in $X$. 
For every two vertex sets $A, B$ of $V(D)$, let $E(A,B)$ be the set of arcs in $A(D)$ with tail in $A$ and head in $B$.
A digraph $D$ is \emph{strongly-connected} if for every ordered pair of vertices $u,v\in V(D)$
there exists a directed path from $u$ to $v$. 

The \emph{girth} $g(D)$ of $D$ is the minimum length of a directed cycle in $D$ (if $D$ is acyclic we define $g(D)=\infty$). 
We write $\ell(D)$ for the maximum length of a directed path in $D$. 

\section{Construction}

We start by constructing counterexamples to  Thomass\'{e}'s conjecture 
for every $g \ge 4$, as stated in Proposition \ref{counter}.
Suppose that $D$ is a digraph with $d^+(v)=\delta$ for each vertex $v\in V(D)$. 
For each $k\geq 1$, we define the \emph{k-lift} operation on some fixed vertex $v$ as follows: 
we delete all arcs with tail $v$, add $k-1$ disjoint sets of $\delta$ new vertices $U_{v,1},...,U_{v,k-1}$ to $D$,
write $U_{v,0}:=\{v\}$, $U_{v,k}:=N^+(v)$ and add arcs so that
$U_{v,i-1}$ is completely directed to $U_{v,i}$ for $1 \le i \le k$.
(For example, a $1$-lift does not change the digraph.)
We note that any lift preserves the property that all out-degrees are $\delta$.

Write $\overrightarrow{K}_{\delta+1}$ for the complete directed graph on $\delta+1$ vertices.
Our construction is $D_{a,b} := \overrightarrow{K}_{\delta+1}^{\uparrow}(a,b,\dots,b)$ for  some integer $1\leq a \leq b$, 
meaning that starting from $\overrightarrow{K}_{\delta+1}$, we $a$-lift some vertex $v_1$ and $b$-lift all the other vertices.

\begin{claim}\label{lift}
The girth of $D_{a,b}$ is $a+b$ and the longest path has length $\delta b+a-1$.
\end{claim}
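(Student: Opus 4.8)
The plan is to exploit the rigid structure that the $k$-lift imposes. First I would set up notation: $D_{a,b}$ has $\delta+1$ \emph{base} vertices $v_1,\dots,v_{\delta+1}$, and the lift of $v_i$ replaces the out-star of $v_i$ by a \emph{chain} $C_i$ consisting of levels $\{v_i\}=U_{v_i,0},U_{v_i,1},\dots,U_{v_i,k_i-1},U_{v_i,k_i}=N^+(v_i)$, consecutive levels being completely directed forwards, where $k_1=a$ and $k_i=b$ for $i\ge 2$. One preliminary point is that the order of the lifts is harmless: when we come to $b$-lift $v_i$ its out-neighbourhood is still $\{v_1,\dots,v_{\delta+1}\}\setminus\{v_i\}$, since the earlier lifts only removed arcs with tail $v_j$ ($j<i$) and only added arcs out of such $v_j$ or out of new vertices. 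Hence contracting every chain back to its base vertex returns $\overrightarrow{K}_{\delta+1}$, and --- the key observation --- every arc of $D_{a,b}$ runs from level $t$ of some $C_i$ to level $t+1$ of the same $C_i$, where level $k_i$ of $C_i$ consists of base vertices (= level $0$ of other chains). So along any directed walk the level goes up by one at each step, and can only return to $0$ via a step out of the top level $k_i-1$ of some chain into a base vertex.

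For the girth I would argue that a directed cycle can reach a chain $C_i$ only through $v_i$ (levels $\ge 1$ are reachable only from within $C_i$, and a cycle has no start vertex), so it must traverse $C_i$ entirely and leave through $N^+(v_i)$. A directed cycle therefore projects to a closed walk $v_{i_1}\to\dots\to v_{i_1}$ in $\overrightarrow{K}_{\delta+1}$ on at least two distinct base vertices, and has length $\sum_t k_{i_t}$ with each $k_{i_t}\in\{a,b\}$ and at most one term equal to $a$; this is minimised, with value $a+b$, by a $2$-cycle through $v_1$ and one other vertex, which plainly lifts to a cycle of $D_{a,b}$. Hence $g(D_{a,b})=a+b$.

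For the longest path the lower bound is the explicit directed path starting at $v_1$ that runs up $C_1$ to $v_2$ (length $a$), then up $C_2$ to $v_3$, \dots, then up $C_\delta$ to $v_{\delta+1}$ (each of these $\delta-1$ stretches of length $b$), and finally runs up $C_{\delta+1}$ only as far as level $b-1$ and stops; it is legitimate because in $\overrightarrow{K}_{\delta+1}$ every base vertex dominates every other, and its length is $a+(\delta-1)b+(b-1)=\delta b+a-1$. For the upper bound I would decompose an arbitrary directed path $P$ into maximal monotone runs inside single chains: by the level structure, $P$ is a concatenation of blocks $\beta_0,\beta_1,\dots,\beta_m$, each lying inside one chain $C_{i_t}$, consecutive blocks joined by a single ``reset'' arc (top of $C_{i_{t-1}}$ into $v_{i_t}$), every block $\beta_t$ with $t\ge 1$ beginning at $v_{i_t}$, and every block but the last ending at the top level of its chain. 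A telescoping count then gives $\ell(P)\le \sum_{t=0}^m k_{i_t}-1$.

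It remains to bound $\sum_{t=0}^m k_{i_t}$, and this is where I expect the real difficulty. The blocks $\beta_1,\dots,\beta_m$ automatically lie in pairwise distinct chains, because each begins at the base vertex of its chain and $P$ visits a base vertex at most once; if in addition $\beta_0$ lay in a chain different from all of $\beta_1,\dots,\beta_m$, then $m+1\le\delta+1$ distinct chains occur, at most one of them $C_1$, so $\sum_{t=0}^m k_{i_t}\le a+\delta b$ and $\ell(P)\le\delta b+a-1$, matching the construction. Thus the heart of the proof is to control the one possible exception --- a path that begins part-way up some chain $C_i$ and later returns to $v_i$ and climbs $C_i$ a second time on the still-unused vertices at each level --- and to show that such re-entry cannot push the length above $\delta b+a-1$. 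I would attack this by a local analysis of how $\beta_0$ and a repeated block coexist in one chain (they must use disjoint vertices level by level, and only $v_i$, visited once, can serve as the base-vertex start of the second visit), which pins down exactly how much such a configuration can contribute. The girth statement and the lower bound, by contrast, I expect to be routine.
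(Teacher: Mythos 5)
Your girth argument and your lower-bound path are fine, and your block decomposition with the telescoping count $\ell(P)\le\sum_t k_{i_t}-1$ is essentially the paper's ``segment'' decomposition. The genuine gap is the one you yourself flag and then defer to an unspecified ``local analysis'': the case where the path starts part-way up a chain $C_i$ and later returns to $v_i$ and climbs $C_i$ a second time on unused interior vertices. As written, your proposal does not prove the upper bound in exactly the decisive case --- and in fact that case cannot be closed, because the claimed value is false there whenever $\delta\ge 2$ and $b\ge 2$. Concretely, take $\delta=2$, $a=b=2$, with base vertices $v_1,v_2,v_3$ and interior levels $\{x_1,x_2\},\{y_1,y_2\},\{z_1,z_2\}$ of the three chains. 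Then $y_1\,v_1\,x_1\,v_2\,y_2\,v_3\,z_1$ is a directed path (each arc $y_1\to v_1$, $v_1\to x_1$, $x_1\to v_2$, $v_2\to y_2$, $y_2\to v_3$, $v_3\to z_1$ is present by the lift construction) of length $6>\delta b+a-1=5$. In general, starting at level $1$ of $C_2$, climbing to its top, then visiting all $\delta+1$ base vertices while traversing each chain fully ($C_2$ a second time on fresh interior vertices, which exist since each level has $\delta\ge 2$ vertices) and stopping one step short in the last chain gives a path of length $\delta b+a+b-2$; by your own telescoping count this is also an upper bound, so the true value of $\ell(D_{a,b})$ is $\delta b+a+b-2$ for $\delta\ge2$, $b\ge 2$, not $\delta b+a-1$.

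For comparison, the paper's proof does not treat this configuration either: it tacitly assumes the path splits into at most $\delta+1$ segments lying in distinct chains, with at most one of the two extreme segments full, which is exactly what the re-entry paths above violate (they have $\delta+2$ segments, with one chain used twice). So your instinct that this is ``the heart of the proof'' is correct, but the heart is missing from both arguments, and in your write-up it is explicitly left open; as it stands the proposal does not establish the claim, and no completion of your final step can, since the stated equality fails. Note that the corrected value still leaves the construction as a counterexample to Thomass\'e's conjecture when $\delta(a-1)>a+b-2$ (e.g.\ $\delta\ge3$ for even $g$), so Proposition \ref{counter} survives with adjusted constants, but Claim \ref{lift} itself needs to be restated before any proof --- yours or the paper's --- can be judged complete.
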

\begin{proof}
Let $C$ be any directed cycle in $D_{a,b}$.
By construction, we can decompose $A(C)$ into directed paths of the form $v_iu_1\cdots u_tv_j$ 
such that $u_j\in U_{v_i,j}$ for $1\leq j\leq t$ where $t=a-1$ if $i=1$ and $t=b-1$ if $i\geq 2$;
we call such paths `full segments' and their subpaths `segments'.
A directed cycle contains at least two full segments, so its length is at least $a+b$ since $a\leq b$.
It is also easy to see $D_{a,b}$ does contain a directed cycle of length $a+b$. 

Now suppose $P$ is a directed path in $D_{a,b}$ of maximum length.
Similarly, we can decompose $E(P)$ into segments, where all but the first and the last are full,
and if there are $\delta+1$ segments then at most one of the first and the last is full.
Each segment has length at most $b$, except that the one starting from $v_1$ has length at most $a$,
so $P$ has length at most $\delta b+a-1$.
\end{proof}

Proposition \ref{counter} follows from Claim \ref{lift} 
by taking $a=b=\frac{g}{2}$ for $g$ even 
or $a=\frac{g-1}{2}$ and $b=\frac{g+1}{2}$ for $g$ odd.

\section{The key lemma}

Here we show that Theorems \ref{thm1} and \ref{thm2} follow directly from
known results on the Caccetta-H\"{a}ggkvist conjecture and the following key lemma.

\begin{lemma}\label{lem1}
If $D$ is an oriented graph with $\delta^+(D) \geq \delta$ 
then $D$ either contains a directed path of length  $2\delta$ 
or an induced subgraph $S$ such that $|S|\leq \delta$ and $\delta^+(S) \geq 2\delta-\ell(D)$.
\end{lemma}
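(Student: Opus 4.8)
The plan is to take a longest directed path $P = v_0 v_1 \cdots v_\ell$ in $D$, where $\ell = \ell(D)$, and assume $\ell < 2\delta$, so that we must produce the induced subgraph $S$. The natural candidate for $S$ is built from the \emph{endpoint} of the path: since $P$ is a longest directed path, every out-neighbour of $v_\ell$ must lie on $P$ (otherwise we could extend $P$), so $N^+(v_\ell) \subseteq \{v_0, \dots, v_{\ell-1}\}$. In fact more is true by maximality: if $v_\ell \to v_i$, then $v_i v_{i+1} \cdots v_\ell v_i$ would be a directed cycle, but we need more than that — we want to restrict attention to a short initial segment of $P$. The key observation is that $N^+(v_\ell)$ cannot meet the last few vertices of $P$ in a way that creates a long path, and more importantly, we will look at the set $T := \{v_i : v_\ell \to v_i\}$, which has size at least $\delta$, and show the relevant vertices are confined to an initial block of $P$ of length at most... this is where the counting comes in.

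Here is the cleaner approach I would actually pursue. Let $P = v_0 \cdots v_\ell$ be longest, assume $\ell < 2\delta$. For each index $i$, the out-neighbours of $v_i$ that lie \emph{after} $v_i$ on $P$ occupy positions in $\{i+1, \dots, \ell\}$; call the largest such index $f(i)$ if $v_i$ has a forward neighbour on $P$ (it always does when $i < \ell$, in fact $f(i) \ge i+1$ trivially since $v_{i+1}$ is one, but we want the out-neighbour structure). The crucial point: because $P$ is longest, for the \emph{last} vertex $v_\ell$ all $\delta$ out-neighbours are among $v_0, \dots, v_{\ell-1}$; let $j$ be the \emph{largest} index with $v_\ell \to v_j$. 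Then $v_{j} v_{j+1} \cdots v_\ell v_{j}$ is a directed cycle through the last $\ell - j + 1$ arcs of... and rerouting gives that $v_0 \cdots v_j v_\ell v_{j-1}$-type detours don't help. I would set $S$ to be the subgraph induced on $\{v_\ell\} \cup N^+(v_\ell) \subseteq \{v_0, \dots, v_{\ell-1}, v_\ell\}$, note $|S| \ge \delta + 1$ — wait, we need $|S| \le \delta$, so instead $S$ should be induced on a set of \emph{at most} $\delta$ vertices, namely an initial segment $\{v_0, \dots, v_{m}\}$ for a well-chosen $m$.

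So the real plan: let $m$ be minimal such that $v_\ell$ has an out-neighbour in $\{v_0, \dots, v_m\}$ — actually let $m$ be the largest index such that $v_m$ is an out-neighbour of $v_\ell$; since $P$ is longest and there is no $2$-cycle, a short computation shows $m \le \ell - \delta \le \delta - 1$ (using $\ell < 2\delta$ and the fact that the $\delta$ out-neighbours of $v_\ell$ are spread among $v_0,\dots,v_m$, with the oriented-graph hypothesis ruling out $v_{\ell-1}$). Then take $S := D[\{v_0, \dots, v_m\}]$, so $|S| \le m+1 \le \delta$. To bound $\delta^+(S)$ from below: for each $v_i$ with $i \le m$, every out-neighbour of $v_i$ that does \emph{not} lie in $S$ lies in $\{v_{m+1}, \dots, v_\ell\}$ \emph{or} is an off-path vertex $w$; if $v_i \to w$ with $w \notin P$, then $w v_0 \cdots$ is irrelevant but $v_0 \cdots v_i w$ followed by... the maximality of $P$ forces $w$ to have all its forward structure blocked, and combined with $v_m \to \cdots \to v_\ell \to v_m$ giving a cycle, we can splice to get a path longer than $\ell$ unless $v_i$ has at least $\delta - (\ell - m)$ out-neighbours \emph{inside} $\{v_0,\dots,v_m\}$. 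Since $\ell - m \le \ell - 0$... I want $\ell - m \le \ell - (\ell - \delta)$ hmm — the bookkeeping needs care, but the target $\delta^+(S) \ge 2\delta - \ell$ is exactly $\delta - (\ell - \delta)$, which matches "$v_i$ loses at most $\ell - \delta$ out-neighbours to the tail of $P$". The main obstacle, and the step I'd spend the most effort on, is precisely this last splicing argument: showing that an out-neighbour of an early vertex $v_i$ lying on the late part of $P$ (or off $P$) can be combined with the cycle $v_m \cdots v_\ell v_m$ to contradict maximality of $P$, so that all but at most $\ell - \delta$ of $v_i$'s out-neighbours are forced back into the initial segment. Everything else — extracting a longest path, the no-$2$-cycle bound on $m$, and assembling the conclusion — is routine.
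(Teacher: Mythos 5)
There is a genuine gap, and it sits exactly at the step you flag as "the main obstacle." Your plan takes $S$ to be an initial segment $\{v_0,\dots,v_m\}$ of a longest path $P=v_0\cdots v_\ell$ and tries to force every $v_i$ with $i\le m$ to retain at least $2\delta-\ell$ out-neighbours inside that segment, by splicing any arc from $v_i$ into the tail of $P$ together with the terminal cycle to contradict maximality of $P$. That splicing fails: if $a$ is the \emph{minimum} index with $v_\ell\to v_a$ and $C=v_a v_{a+1}\cdots v_\ell v_a$, then an arc $v_i\to v_j$ with $i<a\le j$ only reroutes to the path $v_0\cdots v_i v_j v_{j+1}\cdots v_\ell v_a\cdots v_{j-1}$, whose length is $\ell+i-a+1\le \ell$, so no contradiction arises. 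Hence nothing prevents an early vertex (even $v_0$, e.g.\ when $a=1$) from having \emph{all} $\delta$ of its out-neighbours on $C$, in which case your $S$ has minimum out-degree $0$ and the lemma's conclusion is not delivered. Separately, your claimed bound $m\le\ell-\delta$ for the \emph{largest} index $m$ with $v_\ell\to v_m$ is false in general: out-neighbours of $v_\ell$ near the end of $P$ merely create short cycles, which the hypotheses allow; the "spread" argument only bounds the minimum such index ($a\le\ell-\delta-1$), which gives $|S|\le\delta$ for the initial segment but does nothing for the out-degree condition.

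The paper's proof places $S$ at the other end of the path and needs two ingredients absent from your outline. After passing to a strong component with all out-degrees exactly $\delta$, it shows $D$ has no two disjoint cycles of length at least $\delta+1$ (via strong connectivity), chooses $P$ to maximize, secondarily, the length of the terminal cycle $C$, proves that every out-neighbour of $v_{a-1}$ lies on $P$, and proves that the set $B^-$ of $C$-predecessors of $B=N^+(v_{a-1})\cap V(C)$ satisfies $N^+(B^-)\subseteq V(C)$. Taking $S=D[B^-]$ (so $|S|=|B|\le\delta$), a vertex of $S$ of minimum out-degree in $S$ has all $\delta$ out-neighbours on $C$, giving $|C|\ge |S|-\delta^+(S)+\delta$; combining with $\ell\ge |A|+1+|C|$ and $|A|+|B|=\delta$ yields $\delta^+(S)\ge 2\delta+1-\ell$. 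Both the secondary (cycle-bound) maximization and the disjoint-cycle claim are what confine the relevant out-neighbourhoods; without analogues of them, the per-vertex forcing your argument relies on cannot be established.
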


We use the following bounds on Caccetta-H\"{a}ggkvist in general by  Chv\'{a}tal and Szemer\'{e}di \cite{Chvatal} 
and in the case of directed triangles by  Hladk\'{y}, Kr\'{a}l, and Norin \cite{Norin}.

\begin{theorem}\label{thm0}
Every digraph $D$ with order $n$ and $\delta^+(D) \geq \delta$ contains a directed cycle of length at most $\lceil \frac{2n}{\delta+1}\rceil$.
\end{theorem}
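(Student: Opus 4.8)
This records a known bound, so the quickest route is to cite: the general inequality $g(D)\le\lceil 2n/(\delta+1)\rceil$ is the Chv\'atal--Szemer\'edi bound \cite{Chvatal}, and in the applications where the cycle one ends up needing is a triangle we will instead invoke the Hladk\'y--Kr\'al--Norin bound \cite{Norin}, by which $\delta^+(D)\ge 0.3465\,n$ already forces a directed triangle. Since the note is short I would also like to sketch a self-contained derivation.

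For that I would fix a shortest directed cycle $C=v_0v_1\cdots v_{g-1}v_0$ and first record its rigidity: $C$ has no chord (a chord would shortcut $C$), so each $v_i$ has exactly one out-neighbour on $C$, namely $v_{i+1}$; and the directed distance in $D$ from $v_i$ to $v_j$ is exactly the cyclic distance $(j-i)\bmod g$, since a shorter route joined to the complementary arc of $C$ would close up to a cycle of length $<g$. The target is the inequality $g(\delta+1)\le 2n+\delta$, which already yields $g\le\lceil 2n/(\delta+1)\rceil$. The intended way to get it: attach to each $v_i$ the set $B_i$ of vertices reachable from $v_i$ by a directed path of length at most $\lfloor g/2\rfloor$, note $|B_i|\ge\delta+1$ since $B_i\supseteq N^+[v_i]$, and argue that the $g$ sets $B_i$ are pairwise (essentially) disjoint, so that $n\gtrsim g(\delta+1)/2$.

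I expect the disjointness to be the whole difficulty. A vertex $w\in B_i\cap B_j$ only exhibits two directed paths \emph{into} $w$, which forces no cycle at all --- out-reachability from distinct sources may overlap freely --- so a naive double count fails. To turn an overlap into a short cycle one has to couple an out-path from $v_i$ to $w$ with a path continuing from $w$ back to $C$ (using that $w$ too has out-degree $\ge\delta$), or, as in \cite{Chvatal}, run a breadth-first search from a single vertex and use that a collision of the radius-$t$ ball with an earlier BFS layer already yields a cycle of length about $t$; making this quantitative is precisely the content of \cite{Chvatal}. As a crude baseline needing no machinery, one always has $g\le n-\delta+1$: take a longest directed path $u_0u_1\cdots u_m$, so that $N^+(u_m)\subseteq\{u_0,\dots,u_{m-1}\}$ and hence $m\ge\delta$; if $j$ is the largest index with $u_m\to u_j$ then $j\ge\delta-1$, and $u_ju_{j+1}\cdots u_mu_j$ is a directed cycle of length $m-j+1\le n-\delta+1$. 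This is enough for intuition but too weak for Theorems~\ref{thm1} and~\ref{thm2}, which need the genuine $2n/(\delta+1)$ scaling and, when the relevant cycle has length $3$, the constant $0.3465$.
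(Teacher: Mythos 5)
The paper itself gives no proof of this statement: it is imported verbatim as the known Chv\'atal--Szemer\'edi bound \cite{Chvatal} (with \cite{Norin} for the triangle case), which is exactly your primary route, so your proposal takes essentially the same approach as the paper. Your attempted self-contained derivation is, as you yourself point out, incomplete --- the pairwise disjointness of the out-reachability sets $B_i$ does genuinely fail and the honest argument is the BFS-collision analysis of \cite{Chvatal} --- but since you defer the full $\lceil 2n/(\delta+1)\rceil$ bound to that citation rather than claiming the sketch proves it, the proposal is correct as it stands.
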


\begin{theorem}\label{thm00}
Every oriented graph with order $n$ and minimum out-degree $0.3465n$ contains a directed triangle.
\end{theorem}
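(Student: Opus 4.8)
The plan is to prove the contrapositive of Theorem \ref{thm00}: that every triangle-free oriented graph $D$ on $n$ vertices has a vertex of out-degree strictly less than $\gamma n$ with $\gamma=0.3465$. Since the extremal behaviour is governed entirely by local densities, it is cleanest to pass to a convergent sequence of near-extremal examples and argue in the language of limits of dense digraphs. I would therefore first fix the flag-algebra framework for oriented graphs (in the style of Razborov): a \emph{type} $\sigma$ is a fixed oriented graph on a labelled vertex set, a \emph{$\sigma$-flag} is an oriented graph carrying one labelled copy of $\sigma$, and flags come equipped with an unlabelling (averaging) operator together with the fundamental fact that any square $f^2$ of a formal linear combination of flags has non-negative density in every limit, i.e.~$[\![ f^2]\!] \ge 0$. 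In this language the triangle-freeness becomes the single linear constraint that the density of the directed triangle is zero, and the goal is to show that the minimum out-degree density cannot exceed $\gamma$.

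To calibrate the difficulty and fix notation I would first run the classical elementary counting argument. Fixing a vertex $v$ and writing $A=N^+(v)$, triangle-freeness forces $N^+(u)\cap N^-(v)=\emptyset$ for each $u\in A$, since $v\to u\to w\to v$ would be a directed triangle; more generally, double counting directed paths of length two lets one bound the bipartite arc densities between $N^+(v)$, $N^-(v)$ and the remaining vertices. Summing these inequalities over all vertices and optimising the resulting linear system reproduces the weaker classical constants (such as $(3-\sqrt5)/2$ and its subsequent improvements). This warm-up already shows that a finite collection of densities of configurations on three and four vertices controls the problem, which is precisely what the flag-algebra formulation systematises.

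The heart of the argument is then a semidefinite programming certificate. I would enumerate all oriented graphs on at most five or six vertices, together with all $\sigma$-flags over a short list of small types, obtaining a finite set of density variables subject to the chain (averaging) identities, the Cauchy--Schwarz squares $[\![ f^2]\!]\ge 0$, the triangle constraint, and the out-degree constraint. The task is to find positive semidefinite matrices $Q_\sigma$, one per type, and non-negative multipliers, so that the induced sum-of-squares, added to the constraints, bounds the minimum out-degree density above by $\gamma$. Solving this program numerically yields approximate matrices $Q_\sigma$, and the final rigorous step is to replace them by exact rational matrices that remain positive semidefinite and still deliver the inequality with the stated constant.

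The hard part will be exactly this passage from a floating-point optimum to an exact proof. At the sharp constant the program is typically feasible only with vanishing slack, so naive rounding of the $Q_\sigma$ destroys positive semidefiniteness; instead one must allow a minuscule loss in the constant, identify the active zero-eigenvalue directions dictated by the (conjecturally blow-up) extremal configuration, and round within the face of the cone those directions determine. By contrast the limit set-up, the flag enumeration, and the elementary warm-up are routine once automated — the warm-up only certifies what is attainable without semidefinite programming, and it is the descent from those weaker constants down to $0.3465$ that genuinely requires the flag-algebra machinery.
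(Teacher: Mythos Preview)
The paper does not prove Theorem~\ref{thm00}: it is quoted, without proof, as a result of Hladk\'{y}, Kr\'{a}l and Norin~\cite{Norin}, and then applied as a black box in the proof of Theorem~\ref{thm2}. So there is no ``paper's own proof'' to compare your proposal against.

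For what it is worth, your sketch does accurately describe, at a high level, the flag-algebra approach that Hladk\'{y}, Kr\'{a}l and Norin actually use to obtain the constant $0.3465$, so the plan points in the right direction. However, as written it is a description of a methodology rather than a proof: the entire content of such an argument lies in the explicit semidefinite certificate --- the specific choice of types, the list of flags, the positive semidefinite matrices $Q_\sigma$, and the exact rational rounding --- and none of that data appears in your proposal. Without the certificate (or at least a pointer to where it can be verified), the proposal is not a checkable proof; the step you yourself identify as the hard part (passing from a floating-point optimum to an exact rational certificate delivering $0.3465$) is precisely the step that is missing.
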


Now we deduce Theorems \ref{thm1} and \ref{thm2}, assuming the key lemma.

\begin{proof}[Proof of Theorem \ref{thm1}]
Suppose that $D$ is an oriented graph with $\delta^+(D)\geq \delta$ and girth $g$. 
By Lemma \ref{lem1}, $D$ contains a directed path of length $2\delta$ or an induced subgraph $S$ with $|S|\leq \delta$ and $\delta^+(S) \geq 2\delta-\ell(D)$. We assume the latter case holds.
According to Theorem \ref{thm0}, $S$ contains a directed cycle of length at most $\frac{2\delta}{2\delta-\ell(D)+1}$. 
Therefore, $g\leq \frac{2\delta}{2\delta-\ell(D)+1}$, so $\ell(D) \geq 2\delta(1-\frac{1}{g})+1\geq 2\delta(1-\frac{1}{g})$.
\end{proof}

\begin{proof}[Proof of Theorem \ref{thm2}]
First, suppose that $D$ is an oriented graph with $\delta^+(D)\geq \delta$. 
By Lemma \ref{lem1}, either $D$ contains a directed path of length $2\delta$ 
or $D$ contains an induced subgraph $S$ such that $|S|\leq \delta$ and $\delta^+(S) \geq 2\delta-\ell(D)$. 
Since $D$ is oriented, for some vertex $b\in S$, we have $d^+(b,S)\leq \frac{|S|-1}{2}$, 
which means that $\delta^+(S)\leq \frac{|S|-1}{2} \leq \frac{\delta-1}{2}$ and so $\ell(D) \geq 2\delta-\delta^+(S)\geq \frac{3}{2}\delta$.
Similarly, if $D$ has girth at least $4$ then substituting the bound $\delta^+(S)< 0.3465 \delta$ from Theorem \ref{thm00}
we obtain $\ell(D)>1.6535\delta$.
\end{proof}

In fact, by Lemma \ref{lem1}, any improved bound towards the Caccetta-H\"{a}ggkvist conjecture 
can be used to get a better bound for $\ell(D)$ when $\delta^+(D)\geq \delta$ and girth $g$. For example, the main result in \cite{shen2002caccetta} will give the bound $\ell(D) \ge (2-\frac{1}{g-73})\delta$. 
The Caccetta-H\"{a}ggkvist conjecture itself would imply $\ell(D) \ge (2-\frac{1}{g})\delta$.

\section{Proof of the key lemma}

Suppose that $D$ is an oriented graph with $\delta^+(D)\geq \delta$ 
and no directed path of length $2\delta$.
We can assume that $D$ is strongly-connected,
as there is a strong component of $D$ with minimum out-degree at least $\delta$.
By deleting arcs, we can also assume that all out-degrees are exactly $\delta$.
Note that $|V(D)|\geq 2\delta+1$, since $D$ is oriented and $\delta^+(D)\geq \delta$. 

\begin{claim} \label{clm1}
$D$ does not  contain two disjoint directed cycles of length at least $\delta+1$. 
\end{claim}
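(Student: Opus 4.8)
The plan is to argue by contradiction: suppose $C_1$ and $C_2$ are vertex-disjoint directed cycles in $D$, each of length at least $\delta+1$. The strategy is to build a long directed path by stitching together a long arc of $C_1$, a short connecting path, and a long arc of $C_2$; since each $C_i$ has at least $\delta+1$ vertices, a directed path that traverses almost all of one cycle already has length roughly $\delta$, so combining two such arcs should comfortably exceed $2\delta$, contradicting the standing assumption that $D$ has no directed path of length $2\delta$. The key point to exploit is that $D$ is strongly-connected, which guarantees a directed path from $C_1$ to $C_2$ (and back).

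More concretely, here is how I would carry it out. First, since $D$ is strongly-connected, pick a shortest directed path $Q$ from $V(C_1)$ to $V(C_2)$; say it goes from $x_1\in V(C_1)$ to $x_2\in V(C_2)$, and by minimality $Q$ meets each $C_i$ only at its endpoint. Let $y_1$ be the in-neighbour of $x_1$ on $C_1$: then traversing $C_1$ from $x_1$ all the way around to $y_1$ gives a directed path inside $C_1$ using all $|C_1|\ge \delta+1$ vertices, i.e.\ of length $|C_1|-1\ge \delta$. Appending $Q$ and then going around $C_2$ starting from $x_2$ back to its in-neighbour $y_2$ on $C_2$ gives a directed path of length $(|C_1|-1)+|Q|+(|C_2|-1)\ge \delta + 1 + \delta = 2\delta+1 > 2\delta$ — a contradiction. (Here $|Q|\ge 1$ since $C_1,C_2$ are disjoint.) One should double-check that the three pieces are internally vertex-disjoint: $C_1$ and $C_2$ are disjoint by hypothesis, and $Q$ is chosen to be internally disjoint from both cycles by the shortest-path minimality, so the concatenation is indeed a path.

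The main obstacle — really the only subtlety — is making sure the concatenation is genuinely a directed path rather than a walk, i.e.\ that no vertex is repeated; this is exactly what the "shortest path between the cycles" choice buys us, together with disjointness of $C_1$ and $C_2$. A secondary point worth stating cleanly is why strong-connectivity is available at all: as noted just before the claim, one may pass to a strong component with the out-degree condition preserved, so we lose nothing. With those two points in place the length bound $2\delta+1$ drops out immediately, so there is no delicate calculation to grind through.
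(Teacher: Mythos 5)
Your argument is essentially the paper's own proof: strong connectivity gives a connecting path $Q$ from $C_1$ to $C_2$ internally disjoint from both cycles, and stitching a near-complete traversal of $C_1$, then $Q$, then a near-complete traversal of $C_2$ yields a directed path of length at least $\delta+1+\delta=2\delta+1$, contradicting the standing assumption that $\ell(D)<2\delta$. One small orientation slip to fix: the traversal of $C_1$ must \emph{end} at $x_1$ (delete the out-arc of $x_1$ and start at its out-neighbour), rather than running from $x_1$ to its in-neighbour $y_1$ as you wrote, since $Q$ departs from $x_1$; with that trivial correction the concatenation is a genuine directed path and the proof is complete.
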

\begin{proof}
Suppose on the contrary that $C_1$ and $C_2$ are two such cycles. By strong connectivity, 
there exists a path $P$ from $u_1\in C_1$ to $u_2\in C_2$ with $V(P)$ internally disjoint from $V(C_1) \cup V(C_2)$. 
Writing $u_1 u_1^{\prime}$ for the out-arc of $u_1$ in $C_1$ and $u_2^{\prime} u_2$ for the in-arc of $u_2$ in $C_2$, 
the path $\{C_1-u_1 u_1^{\prime} \}+P+\{C_2-u_2^{\prime} u_2 \}$ has length at least $2\delta+1$, a contradiction.
\end{proof}

Now let $P=v_0v_1 \cdots v_{\ell(D)}$ be a directed path of maximum length, where $\ell(D)<2\delta$.
By maximality of $P$, the out-neighbours $N^+(v_{\ell(D)})$ of $v_{\ell(D)}$ must lie on $P$. 
Let $v_a \in N^+(v_{\ell(D)})$ such that the index $a$ is minimum among all the out-neighbours of $v_{\ell(D)}$. 
Thus $C=v_av_{a+1}\cdots v_{\ell(D)}v_a$ is a directed cycle; we call $|C|$ the \emph{cycle bound} of $P$.
For future reference, we record the consequence
\begin{equation} \label{cyc}
\ell(D) \ge g(D) \text{ for any digraph } D.
\end{equation}

Choose $P$ such that the cycle bound of $P$ is also maximum subject to that $P$ is a directed path of length $\ell(D)$. 
Clearly $a\neq 0$, otherwise using $|V(D)|\geq 2\delta+1$ and strong connectivity, 
we can easily add one more vertex to $C$ and get a longer path, contradiction.

\begin{claim} \label{clma}
Every vertex in $N^+(v_{a-1})$ must be on $P$.
\end{claim}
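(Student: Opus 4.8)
The plan is to argue by contradiction: suppose some vertex $w \in N^+(v_{a-1})$ does not lie on $P$. First I would observe that $w$ cannot be $v_0$'s predecessor or anything off the path in a way that immediately extends $P$, so the real content is to produce a directed path of length $\ell(D)$ whose cycle bound strictly exceeds $|C|$, contradicting the maximality of the cycle bound. The natural candidate to reroute through is the arc $v_{a-1}w$: consider the walk $v_0 v_1 \cdots v_{a-1} w$. This has length $a$, which is too short on its own, so the key is to splice it together with the tail portion of $P$.

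The main idea I would pursue is this. Since $w \notin V(P)$ and $P$ is a longest path, every out-neighbour of $w$ must lie on $P$ as well (otherwise $P' = v_0 \cdots v_{a-1} w x$ for an out-neighbour $x \notin V(P)$ would be longer — wait, that has the same length only if... actually $v_0\cdots v_{a-1}w$ has length $a < \ell(D)$, so I'd instead want to reattach to the long tail). So I would look at $Q = w\, v_{a-1}$? No — rather, form $P'' = w\, v_a v_{a+1} \cdots v_{\ell(D)}$ using the arc $v_{\ell(D)} v_a$? That reuses a vertex. Let me reconsider: the cleanest route is $v_0 v_1 \cdots v_{a-1}\, w$, and then since $w$ has out-degree $\delta$ with all out-neighbours on $P$, and $P$ is longest, analyze where those neighbours sit. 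If $w$ has an out-neighbour $v_j$ with $j \geq a$, then $v_0 \cdots v_{a-1} w v_j v_{j+1} \cdots v_{\ell(D)}$ is a path; for it to not exceed $\ell(D)$ we need $j \geq a+1$, and then its cycle bound (the cycle closed using the minimum-index out-neighbour of $v_{\ell(D)}$, which is still $v_a$, but now $v_a \notin$ this path unless...) — this needs care. Alternatively, if all out-neighbours of $w$ have index $< a$, then $w$ together with the cycle $C$ forms structure contradicting Claim~\ref{clm1} or giving a longer cycle: indeed $v_a \cdots v_{\ell(D)} v_a$ is a cycle of length $|C|$, and if $w \to v_i$ with $i \le a-1$ and $v_{a-1} \to w$, we get a cycle $w v_{a-1}$? no, wrong direction. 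We'd get $v_i v_{i+1} \cdots v_{a-1} w v_i$, a cycle through $w$ not on $C$.

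So the structure of the argument I anticipate is: let $j$ be minimal such that $v_j \in N^+(w)$; such $j$ exists since all out-neighbours of $w$ lie on $P$. Then $P' := v_0 \cdots v_{a-1}\, w\, v_j \cdots v_{\ell(D)}$ is a directed path (the pieces are disjoint once we check $j \geq a$, which must hold because if $j \le a-1$ then... we can also run the argument producing a longer path, contradiction — or we use it to build a bigger cycle bound). Assuming $j \geq a$, the path $P'$ has length $a + 1 + (\ell(D) - j) \le \ell(D)$, forcing $j \geq a+1$. Now I would compute the cycle bound of $P'$: its last vertex is still $v_{\ell(D)}$, whose minimum-index out-neighbour on $P$ is $v_a$; but is $v_a$ on $P'$? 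Only if $a \ge j$, which fails. So the minimum-index out-neighbour of $v_{\ell(D)}$ that actually lies on $P'$ is some $v_b$ with $b \ge j > a$, giving a cycle of length $\ell(D) - b + 1 < |C| = \ell(D) - a + 1$ — that's the wrong direction, it makes the cycle bound smaller. Hence I would instead need to measure the cycle bound via the $w$–segment: the vertex $w$ sits at position $a$ on $P'$, and I'd close a cycle using an out-neighbour of $v_{\ell(D)}$ or directly argue $v_{\ell(D)} \to w$ is impossible etc.

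The hard part, which I'd flag as the main obstacle, is precisely this bookkeeping about which rerouted path has a strictly larger cycle bound: one must choose the reattachment point carefully (plausibly: take $w$'s out-neighbour of \emph{largest} index among those $> $ something, or use that $v_{\ell(D)}$'s out-neighbours all lie on $P'$ too by maximality, then the new minimum-index out-neighbour relative to $P'$ starts the cycle earlier in the relabelled path). I expect the resolution is that $w$ must in fact \emph{extend into the cycle region* so that $P'$ still has length $\ell(D)$ but the out-neighbour $v_{a-1}$ creates, via $w$, a longer closing cycle $v_{a-1} \to w \to v_j \to \cdots \to v_{\ell(D)} \to v_a \to \cdots \to v_{a-1}$ of length $(\ell(D)-j+1) + 1 + (a - 1 - a + 1)$... — in any case, the combinatorial heart is finding the right cycle in $P \cup \{w\}$ of length $> |C|$, and once the correct path/cycle pair is identified the contradiction with maximality of the cycle bound is immediate. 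I would also keep Claim~\ref{clm1} in reserve as an alternative source of contradiction if $w$'s neighbourhood forces two disjoint long cycles.
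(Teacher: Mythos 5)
Your proposal has a genuine gap, and in fact rests on a premise that is not justified: you assert that since $w\notin V(P)$ and $P$ is longest, every out-neighbour of $w$ lies on $P$. Maximality of $P$ gives this only for the terminal vertex $v_{\ell(D)}$; the path $v_0\cdots v_{a-1}w x$ has length just $a+1<\ell(D)$, so nothing forces $N^+(w)\subseteq V(P)$ --- you notice this yourself mid-argument but never repair it. The paper's repair is the step you are missing: extend $w$ to a \emph{maximal} directed path $P_1=w_1\cdots w_m$ inside the induced subgraph on $V(D)\setminus V(P)$; then by maximality of $P_1$ the endpoint $w_m$ does have all out-neighbours in $V(P)\cup V(P_1)$, and the analysis is carried out for $w_m$, not for $w$. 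You are also missing the ``wrap around $C$'' trick that handles the case of an out-neighbour on the cycle: if $w_m$ (or $w$) has an out-neighbour $v_j\in V(C)$, one does not stop at $v_{\ell(D)}$ but continues through the arc $v_{\ell(D)}v_a$ around $C$ up to $v_{j-1}$, producing the path $v_0\cdots v_{a-1}P_1 v_j\cdots v_{\ell(D)}v_a\cdots v_{j-1}$ of length at least $\ell(D)+1$, an immediate contradiction. Without this, your case ``$j\geq a$'' stalls, and as you admit, the cycle-bound comparison you attempt goes the wrong way (the rerouted path's cycle bound shrinks rather than grows), so no contradiction with the choice of $P$ is obtained.

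Once out-neighbours on $C$ are excluded, the paper does not fight with cycle bounds at all: since $N^+(w_m)\subseteq V(P_1)\cup\{v_0,\dots,v_{a-1}\}$ and $d^+(w_m)=\delta$ in an oriented graph, the out-neighbour of $w_m$ farthest back along $v_0\cdots v_{a-1}w_1\cdots w_m$ closes a cycle $C_1$ of length at least $\delta+2$ that is vertex-disjoint from $C$, contradicting Claim~\ref{clm1}. You mention Claim~\ref{clm1} only ``in reserve'' and never construct the two disjoint long cycles, so the combinatorial heart of the argument --- the maximal-path extension off $P$, the wrap-around extension through $C$, and the disjoint-cycles contradiction --- is absent, and the proof as proposed does not go through.
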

\begin{proof}
Suppose on the contrary that there exists an out-neighbour $w_1$ of $v_{a-1}$ such that $w_1\in V(D)\setminus V(P)$. 
Let $D_1$ be the induced graph of $D$ on $V(D)\setminus V(P)$. We extend the vertex $w_1$ to a maximal directed path $P_1=w_1w_2\cdots w_m$ in $D_1$. 
Since $P_1$ is maximal in $D_1$, all the out-neighbours of $w_m$ must be on $V(P)\cup V(P_1)$, see Figure \ref{fig1}(a).

\begin{figure}[h]
\centering
\begin{tikzpicture}[>=stealth, scale=0.8]
\draw (4,15.2) node[anchor=center] {$\ldots$};
\draw[decorate, segment length=6.5mm, line width=1pt, ->] 
    (2,15)-- (3,15);
\draw[decorate, segment length=6.5mm, line width=1pt, ->] 
    (2.9,15)-- (8,15);
\draw[decorate, segment length=6.5mm, line width=1pt] 
    (7.9,15)-- (9.4,15);
\draw[segment length=6.5mm, line width=1pt, dashed, ->] 
    (6.5,13) .. controls (5.5,13.3) and (4.5,14) ..(3.5,14.95);
\draw[decorate, segment length=6.5mm, line width=1pt] 
    (5,15)-- (6.5,13);
\draw[decorate, segment length=6.5mm, line width=1pt, ->] 
    (5.36,14.52)-- (5.46,14.4);
\draw (6,14.2) node[anchor=center] {
    \begin{turn}{-20}
        $\ddots$
    \end{turn}};
\draw[segment length=6.5mm, line width=1pt, dashed, ->] 
    (6.5,13) -- (6.8,14.95);
\draw (6.7,15.2) node[anchor=center] {$\ldots$};  \fill (6.5,13) circle (0.07cm); 
\fill (5,15) circle (0.07cm);
\draw[segment length=6.5mm, line width=1pt] (5.4,15) .. controls (6.4,16.3) and (8.4,16.3) .. (9.4,15);
\draw[decorate, segment length=6.5mm, line width=1pt, ->] 
    (7.3,15.98)-- (7.2,15.98);
\fill (5.4,15) circle (0.07cm);
\fill (9.4,15) circle (0.07cm);
\fill (2,15) circle (0.07cm);
\draw (2,14.7) node[anchor=center] {$v_0$};
\draw (4.7,14.7) node[anchor=center] {$v_{a-1}$};
\draw (5.6,14.7) node[anchor=center] {$v_{a}$};
\draw (9.4,14.7) node[anchor=center] {$v_{l(D)}$};
\draw (2,16.1) node[anchor=center] {$(a)$};
\end{tikzpicture}
\hspace{3em}
\begin{tikzpicture}[>=stealth, scale=0.8]
\draw (4,15.2) node[anchor=center] {$\ldots$};
\draw[decorate, segment length=6.5mm, line width=1pt, ->] 
    (2,15)-- (3,15);
\draw[decorate, segment length=6.5mm, line width=1pt, ->] 
    (2.9,15)-- (7.5,15);
\draw[decorate, segment length=6.5mm, line width=1pt] 
    (7.4,15)-- (9.4,15);
\fill (5,15) circle (0.07cm);
\draw[segment length=6.5mm, line width=1pt] (5.4,15) .. controls (6.4,16.3) and (8.4,16.3) .. (9.4,15);
\draw[decorate, segment length=6.5mm, line width=1pt, ->] 
    (7.3,15.98)-- (7.2,15.98);
\draw[segment length=6.5mm, line width=1pt] (5,15) .. controls (5.6,13) and (8,13) .. (8.6,15);
\draw[decorate, segment length=6.5mm, line width=1pt, ->] 
    (6.9,13.51)-- (7,13.525);
\draw[segment length=6.5mm, line width=1pt, dashed,->] (8.2,15) .. controls (7,13.5) and (4.7,13.5) .. (3.5,14.95);
\fill (5.4,15) circle (0.07cm);
\fill (9.4,15) circle (0.07cm);
\fill (2,15) circle (0.07cm);
\draw (2,14.7) node[anchor=center] {$v_0$};
\draw (4.6,14.7) node[anchor=center] {$v_{a-1}$};
\draw (5.5,14.7) node[anchor=center] {$v_{a}$};
\draw (9.4,14.7) node[anchor=center] {$v_{l(D)}$};
\draw (8,15.3) node[anchor=center] {$b$};
\draw (8.7,15.3) node[anchor=center] {$b^+$};
\draw (2,16.1) node[anchor=center] {$(b)$};
\fill (8.2,15) circle (0.07cm);
\fill (8.6,15) circle (0.07cm);
\draw (6.7,15.2) node[anchor=center] {$\ldots$}; 
\end{tikzpicture}
\caption{Illustrations for the proofs of Claims \ref{clma} and \ref{claim2}.}
\label{fig1}
\end{figure}
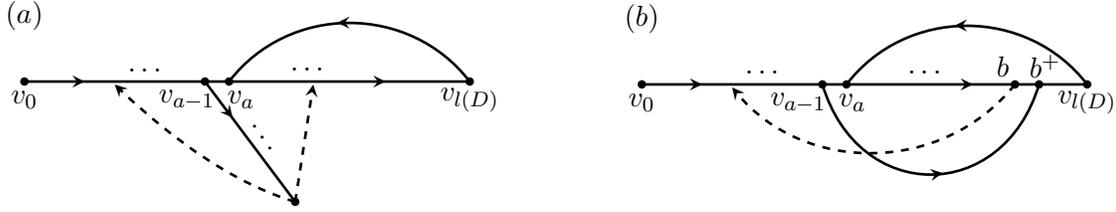

We cannot have $w\in N^+(u_m)$ such that $w\in V(C)$. Indeed, writing $w^-$ for the in-neighbour of $w$ in $C$,
the directed path $P^{\prime}=v_0\ldots v_{a-1}P_1w+(C-w^-w)$ would be longer than $P$, a contradiction.
Thus we conclude that $N^+(w_m)\subseteq V(P_1)\cup \{v_0,...,v_{a-1}\}$. 
Choose a vertex $z\in N^+(w_m)$ that has the largest distance to $w_m$ on the path $P_2=v_0\ldots v_{a-1}w_1 \ldots w_m$. 
Then $P_2\cup w_mz$ contains a cycle $C_1$ of length at least $\delta +2$. 
Now $C_1$ and $C$ are two disjoint directed cycles of length at least $\delta+2$, 
which contradicts Claim \ref{clm1}.
\end{proof}

Let $A=N^+(v_{a-1})\cap \{v_0,\ldots, v_{a-1}\}$ and $B=N^+(v_{a-1})\cap V(C)$. 
Also, let $B^-=\{u: u\in V(C), uv\in A(C)$ for some $v \in B\}$.

\begin{claim} \label{claim2}
$N^+(B^-)\subseteq V(C)$.
\end{claim}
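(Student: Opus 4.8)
The plan is to argue by contradiction, parallel to the proof of Claim~\ref{clma}, but this time extracting the contradiction from the maximality of the \emph{cycle bound} of $P$ rather than from Claim~\ref{clm1}. First I would pin down the shape of $B^-$. Since $v_{a-1}v_a\in A(P)$ we have $v_a\in B$, and in general every $v\in B$ equals $v_j$ for some $a\le j\le\ell(D)$ with $v_{a-1}v_j\in A(D)$; the predecessor of $v_j$ on $C$ is $v_{\ell(D)}$ when $j=a$ and $v_{j-1}$ otherwise. Hence $B^-\subseteq\{v_{\ell(D)}\}\cup\{v_{j-1}\colon a<j\le\ell(D),\ v_{a-1}v_j\in A(D)\}$, and in particular $B^-\subseteq V(C)$. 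For $u=v_{\ell(D)}$ there is nothing to do: by the choice of $a$ as the \emph{smallest} index with $v_a\in N^+(v_{\ell(D)})$, every out-neighbour of $v_{\ell(D)}$ has index $\ge a$, so $N^+(v_{\ell(D)})\subseteq\{v_a,\dots,v_{\ell(D)}\}=V(C)$.

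So fix $u=v_{j-1}$ with $a<j\le\ell(D)$ and $v_{a-1}v_j\in A(D)$, and suppose towards a contradiction that $v_{j-1}w\in A(D)$ for some $w\notin V(C)$. The device I would use is the rerouted path
\[
P^\ast=v_0v_1\cdots v_{a-1}\,v_jv_{j+1}\cdots v_{\ell(D)}\,v_av_{a+1}\cdots v_{j-1},
\]
which is a valid directed path because $v_{a-1}v_j\in A(D)$, $v_{\ell(D)}v_a\in A(C)$, and every other consecutive pair is an arc of $P$; moreover $V(P^\ast)=V(P)$, so $P^\ast$ is again a directed path of maximum length $\ell(D)$, now \emph{ending} at $v_{j-1}$.

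Then I would split on the location of $w$. If $w\notin V(P)=V(P^\ast)$, appending the arc $v_{j-1}w$ to $P^\ast$ gives a directed path of length $\ell(D)+1$, contradicting the maximality of $\ell(D)$. Hence $w\in V(P)\setminus V(C)=\{v_0,\dots,v_{a-1}\}$, say $w=v_i$ with $i\le a-1$. But on $P^\ast$ the vertex $v_i$ sits in position $i\le a-1$ while $v_{j-1}$ is the last vertex, so $v_{j-1}$ has an out-neighbour in position at most $a-1$ along $P^\ast$; therefore the cycle through $v_{j-1}$ furnished by $P^\ast$ (as in the definition of the cycle bound) has length at least $\ell(D)-(a-1)+1=\ell(D)-a+2$. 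This is strictly larger than $|C|=\ell(D)-a+1$, the cycle bound of $P$, contradicting the choice of $P$ as a longest path of largest cycle bound. Thus $N^+(v_{j-1})\subseteq V(C)$, and combined with the case $u=v_{\ell(D)}$ we obtain $N^+(B^-)\subseteq V(C)$.

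The genuinely load-bearing step — and the one I expect to need the most care in stating cleanly — is the construction of $P^\ast$ together with the observation that it turns $v_{j-1}$ into an \emph{endpoint}: in Claim~\ref{clma} the awkward vertex lies in the interior of the path, whereas here we deliberately reroute so that the vertex whose out-neighbourhood we are controlling becomes a path endpoint, which is exactly what lets both path-length maximality (handling $w\notin V(P)$) and \emph{strict} cycle-bound maximality (handling $w=v_i$ with $i<a$) be invoked. Everything else is routine: checking that $P^\ast$ is a path on $V(P)$, and noting that the strict inequality $\ell(D)-a+2>\ell(D)-a+1$ is the source of the contradiction.
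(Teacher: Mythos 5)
Your proof is correct and follows essentially the same route as the paper: your rerouted path $P^\ast$ is exactly the paper's path $P_1=v_0\ldots v_{a-1}b^{+}+(C-bb^{+})$ with $b=v_{j-1}$, $b^{+}=v_j$, and the two cases ($w\notin V(P)$ giving a path of length $\ell(D)+1$, and $w=v_i$ with $i\le a-1$ giving a longest path of strictly larger cycle bound) are the paper's two contradictions. The only cosmetic difference is that you treat $b=v_{\ell(D)}$ separately via the minimality of $a$, which the paper absorbs into the same argument.
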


\begin{proof}
Suppose not, then there exists a vertex $w\in V(D)\setminus V(C)$ such that $bw\in A(D)$ for some $b\in B^-$. 
By definition of $B$, there exists some vertex $b^+\in B$ such that $v_{a-1}b^+\in A(D)$ and $bb^+ \in A(C)$. 
We cannot have $w\in V(D)\setminus V(P)$, as then the path $v_0v_1\ldots v_{a-1}b^++(C-bb^+)+bw$ has  length $\ell(D)+1$, a contradiction.

It remains to show that we cannot have $w\in V(P)\setminus V(C)$. Suppose that we do, with $w=v_i$ for some $0\leq i\leq a-1$. 
Then the cycle $v_iv_{i+1}\ldots v_{a-1}b^++ (C-bb^+)+bv_i$ is longer than $C$. 
However, $P_1=v_0\ldots v_{a-1}b^++(C-bb^+)$ has length $\ell(D)$ and cycle bound larger than $P$, which contradicts our choice of $P$,
see Figure \ref{fig1}(b).
\end{proof}

Now let $S$ be the induced digraph of $D$ on $B^-$. 
Fix $x\in B^-$ with $N^+_{S}(x)=\delta^+(S)$. 
Then $N^+(x)\subseteq V(C)$ by Claim \ref{claim2}.
As $|N^+(x)|=\delta$ we deduce  $|C|\geq |S|-\delta^+(S)+\delta$.

Note that $|P|\geq |A|+1+|C|\geq |A|+1+|B|-\delta^+(S)+\delta$,
as $|S|=|B^-|=|B|$ and $A\subseteq \{v_0,\ldots,v_{a-1}\}$.
But $|A|+|B|=|N^+(v_{a-1})|=\delta$, so
$\ell(D)=|P|\geq 2\delta+1-\delta^+(S)$ and $\delta^+(S)\geq 2\delta+1-\ell(D)$.

This completes the proof of Lemma \ref{lem1}.

\section{Long directed paths in almost-regular digraphs}

In this section, we prove Theorem \ref{ldp}. We start by stating some standard
probabilistic tools (see \cite{PT}). We use the following version of Chernoff's inequality.
\begin{lemma}
Let $X_1,\ldots,X_n$ be independent Bernoulli random variables 
with $\mathbb{P}[X_i=1]=p_i$ and $\mathbb{P}[X_i=0]=1-p_i$ for all $i\in [n]$. 
Let $X=\sum_{i=1}^nX_i$ and $E[X]=\mu$. 
Then for every $0<a<1$, we have $$\mathbb{P}[|X-\mu|\geq a\mu]\leq 2e^{-a^2\mu/3}.$$
\end{lemma}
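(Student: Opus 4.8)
The plan is to prove this via the standard Chernoff exponential-moment method, bounding the upper and lower tails separately and then combining them by a union bound. First I would split
\[
\mathbb{P}[|X-\mu|\geq a\mu]\leq \mathbb{P}[X\geq (1+a)\mu]+\mathbb{P}[X\leq (1-a)\mu],
\]
and aim to show that each term is at most $e^{-a^2\mu/3}$, so that their sum is at most $2e^{-a^2\mu/3}$ as required. In fact the lower tail will satisfy the stronger bound $e^{-a^2\mu/2}$, which is more than enough.

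For the upper tail I would apply Markov's inequality to $e^{tX}$ for a parameter $t>0$ chosen later, giving $\mathbb{P}[X\geq (1+a)\mu]\leq e^{-t(1+a)\mu}E[e^{tX}]$. Since the $X_i$ are independent, the moment generating function factorises as $E[e^{tX}]=\prod_{i=1}^n\bigl(1+p_i(e^t-1)\bigr)$, and the elementary inequality $1+x\leq e^x$ applied to each factor yields $E[e^{tX}]\leq e^{\mu(e^t-1)}$, where $\mu=\sum_i p_i$. Optimising the resulting exponent over $t$ gives the choice $t=\ln(1+a)$, and substituting produces
\[
\mathbb{P}[X\geq (1+a)\mu]\leq \left(\frac{e^a}{(1+a)^{1+a}}\right)^{\!\mu}=e^{-\mu\left((1+a)\ln(1+a)-a\right)}.
\]
It then remains only to verify the analytic inequality $(1+a)\ln(1+a)-a\geq a^2/3$ on $0<a<1$.

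The lower tail is symmetric: applying Markov to $e^{-tX}$ with $t>0$, using the same factorisation and the bound $1+x\leq e^x$, gives $\mathbb{P}[X\leq (1-a)\mu]\leq e^{\mu(e^{-t}-1)+t(1-a)\mu}$, and the optimal choice $t=-\ln(1-a)$ leads to
\[
\mathbb{P}[X\leq (1-a)\mu]\leq e^{-\mu\left((1-a)\ln(1-a)+a\right)}.
\]
Here the relevant inequality is $(1-a)\ln(1-a)+a\geq a^2/2$ on $0<a<1$, the cleaner of the two.

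The only non-mechanical step, and the place to concentrate the rigour, is verifying these two one-variable inequalities. I expect to handle them by differentiation: writing $h(a)=(1+a)\ln(1+a)-a-a^2/3$ we have $h(0)=0$ and $h'(a)=\ln(1+a)-\tfrac{2}{3}a$, which is concave, vanishes at $a=0$, and is positive at $a=1$, hence nonnegative on $[0,1]$; thus $h\geq 0$ there. Similarly, with $k(a)=(1-a)\ln(1-a)+a-a^2/2$ one finds $k(0)=0$ and $k'(a)=-\ln(1-a)-a$, whose derivative $a/(1-a)$ is nonnegative on $[0,1)$, so $k'$ and hence $k$ is nonnegative. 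Since $a^2/2\geq a^2/3$, both tail exponents are at least $a^2\mu/3$, and summing the two tail bounds yields the claim.
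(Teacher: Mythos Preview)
Your argument is correct and is exactly the standard exponential-moment derivation of the Chernoff bound. Note, however, that the paper does not actually prove this lemma: it is quoted as a known tool with a reference to \cite{PT}, so there is no ``paper's own proof'' to compare against. Your write-up would serve perfectly well as a self-contained proof should one be desired; the only places to be careful in a polished version are the two calculus verifications, and your sketches of $h'\ge 0$ (via concavity of $h'$ together with $h'(0)=0$, $h'(1)=\ln 2-\tfrac{2}{3}>0$) and $k'\ge 0$ (via $k''(a)=a/(1-a)\ge 0$) are both valid on $0\le a<1$.
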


We will also use the following version of Lov\'{a}sz Local Lemma.
\begin{lemma} \label{LLL}
Let $A_1,\dots,A_n$ be a collection of events in some probability space.
Suppose that each  $\mathbb{P}[A_i]\leq p$ and each $A_i$ is
mutually independent of a set of all the other events $A_j$ but at most $d$,
where $ep(d+1)<1$. Then $\mathbb{P}[\cap_{i=1}^n \overline{A_i}]>0$.
\end{lemma}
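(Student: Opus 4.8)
The plan is to prove this symmetric form of the Lov\'asz Local Lemma by the standard inductive argument, introducing the single auxiliary parameter $x := \frac{1}{d+1}$ and establishing a stronger statement about conditional probabilities, from which the conclusion follows by a telescoping product. The hypothesis $ep(d+1)<1$ will enter only through the elementary analytic inequality $\left(1-\frac{1}{d+1}\right)^d > \frac{1}{e}$, equivalently $\left(1+\frac1d\right)^d < e$; combined with $p < \frac{1}{e(d+1)}$ this yields the key comparison $p < \frac{1}{e(d+1)} < \frac{1}{d+1}\left(1-\frac{1}{d+1}\right)^d = x(1-x)^d$, which is all that the induction consumes. (The degenerate case $d=0$ is the independent case and is handled directly by $\mathbb{P}[\cap_i\overline{A_i}]=\prod_i(1-\mathbb{P}[A_i])>0$, so I may assume $d\ge 1$.)

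The heart of the argument is the following assertion, proved by induction on $|S|$: for every index $i$ and every set $S \subseteq \{1,\dots,n\}\setminus\{i\}$ whose conditioning event has positive probability, $\mathbb{P}\big[A_i \mid \bigcap_{j\in S}\overline{A_j}\big] \le x$. The base case $S=\emptyset$ is just $\mathbb{P}[A_i] \le p \le x(1-x)^d \le x$. For the inductive step I split $S = S_1 \cup S_2$, where $S_1$ collects the at most $d$ indices on which $A_i$ may depend and $S_2$ the rest, and I write the conditional probability as the ratio $\mathbb{P}[A_i \cap \bigcap_{S_1}\overline{A_j} \mid \bigcap_{S_2}\overline{A_k}]\big/\mathbb{P}[\bigcap_{S_1}\overline{A_j} \mid \bigcap_{S_2}\overline{A_k}]$. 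The numerator is at most $\mathbb{P}[A_i \mid \bigcap_{S_2}\overline{A_k}] = \mathbb{P}[A_i] \le p$, using that $A_i$ is mutually independent of the events indexed by $S_2$. For the denominator I order $S_1 = \{j_1,\dots,j_r\}$ with $r\le d$, expand it as a product of conditional probabilities via the chain rule, and apply the inductive hypothesis to each factor (each conditioning set being strictly smaller than $S$), obtaining the lower bound $\prod_{l=1}^r(1-x) \ge (1-x)^d$. Combining gives $\mathbb{P}[A_i \mid \bigcap_S \overline{A_j}] \le p/(1-x)^d \le x$, closing the induction.

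With this in hand the proof concludes by the chain rule: $\mathbb{P}[\bigcap_{i=1}^n \overline{A_i}] = \prod_{i=1}^n \mathbb{P}[\overline{A_i} \mid \bigcap_{j<i}\overline{A_j}] \ge \prod_{i=1}^n (1-x) = \left(1-\frac{1}{d+1}\right)^n > 0$, each factor being bounded below by the claim. I expect the principal subtleties to be bookkeeping rather than genuine difficulty: one must use the mutual independence hypothesis in its full strength (namely that $A_i$ is independent of arbitrary Boolean combinations of the events outside its dependency neighbourhood, not merely of each such event singly), and one must verify that every conditioning event arising in the induction has positive probability, which is itself guaranteed inductively by the strictly positive denominator bound. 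The only step carrying real analytic content is the inequality $\left(1+\frac1d\right)^d < e$.
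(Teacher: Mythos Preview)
Your proof is correct and is the standard inductive argument for the symmetric Lov\'asz Local Lemma. The paper does not supply its own proof of this lemma; it is quoted as a standard probabilistic tool with a reference to the literature, so there is nothing to compare against.
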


Next we deduce the following useful partitioning lemma.

\begin{lemma}\label{par}
For every $C>0$ there exists $c'>0$ such that 
for any positive integer $d$ with $t:=\floor{c'd/\log d} \ge 1$,
for any $(C,d)$-regular digraph $D$
there exists a partition of $V(D)$ into $V_1\cup \cdots \cup V_t$ 
such that $||V_i|-|V_j||\leq 1$ and $d^+(v,V_j)\geq \frac{\log d}{2c'}$ for each $i,j\in [n]$ and $v\in V_i$.
\end{lemma}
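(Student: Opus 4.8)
The plan is to obtain the partition by a random colouring followed by a Lov\'asz Local Lemma argument to fix up any vertex whose out-neighbourhood is poorly distributed. First I would set $t := \floor{c'd/\log d}$ and colour each vertex of $D$ independently and uniformly with one of $t$ colours, letting $V_1,\dots,V_t$ be the colour classes. For a fixed vertex $v$ and colour $j$, the quantity $d^+(v,V_j)$ is a sum of independent Bernoulli$(1/t)$ variables (one for each of the $d^+(v)\ge d$ out-neighbours of $v$, after restricting attention to exactly $d$ of them), so its expectation is at least $d/t \ge \log d / c'$. By Chernoff's inequality with $a=1/2$, the probability that $d^+(v,V_j)$ drops below $\tfrac12 \cdot d/t \ge \tfrac{\log d}{2c'}$ is at most $2\exp(-\tfrac{1}{12}\cdot d/t) = 2\exp(-\tfrac{1}{12}\cdot \tfrac{\log d}{c'}) = 2 d^{-1/(12c')}$.

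Next I would define, for each vertex $v$, a bad event $A_v$ that holds when $d^+(v,V_j) < \tfrac{\log d}{2c'}$ for some $j\in[t]$; by a union bound over the $t \le d$ colours, $\mathbb{P}[A_v] \le 2d \cdot d^{-1/(12c')} = 2 d^{1-1/(12c')} =: p$, which is small provided $c'$ is chosen small enough (say $c' < 1/24$). The event $A_v$ depends only on the colours of the out-neighbours of $v$, so $A_v$ is mutually independent of all $A_u$ except those $u$ sharing an out-neighbour with $v$ (or with $v$ itself among their out-neighbours). The number of such $u$ is at most $d^+(v)\cdot \max_w d^-(w) \le Cd \cdot Cd = C^2 d^2$ together with the $d^-(v) \le Cd$ vertices pointing at $v$; so the dependency degree is at most $D_0 := 2C^2 d^2$, say. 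Then $e\,p\,(D_0+1) \le e \cdot 2 d^{1-1/(12c')} \cdot 3C^2 d^2 = O(d^{3 - 1/(12c')})$, which tends to $0$, hence is $<1$ for $d$ large, once $c'$ is small enough that $1/(12c') > 3$, i.e.\ $c' < 1/36$. (For the finitely many small $d$ with $\floor{c'd/\log d}\ge 1$ one can absorb them by taking $c'$ smaller still, or treat them directly since then $t$ is bounded.) By Lemma \ref{LLL} there is a colouring avoiding all $A_v$, giving $d^+(v,V_j) \ge \tfrac{\log d}{2c'}$ for every $v$ and $j$.

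Finally I would address the balance condition $\big||V_i|-|V_j|\big|\le 1$, which the random colouring does not guarantee on the nose. The fix is a cleanup step: starting from the colouring produced above, repeatedly move a vertex from a largest class to a smallest class until all class sizes differ by at most $1$. Moving a vertex $v$ from $V_i$ to $V_j$ can only decrease $d^+(u,V_i)$ for out-neighbours $u$ of $v$, so to keep the degree lower bound intact I would instead build in a safety margin: run the Local Lemma argument with the stronger target $d^+(v,V_j)\ge \tfrac{\log d}{c'}$ (doubling the bound costs only a constant factor in the Chernoff exponent, absorbed by halving $c'$ again), and observe that in the rebalancing we move at most $O(t) = O(d/\log d)$ vertices in total, but more carefully, each individual class loses at most... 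Actually the cleanest route is: since each vertex $u$ has at most $Cd$ in-neighbours, and we will only ever need each class $V_j$ to retain $\tfrac{\log d}{2c'}$ of $u$'s out-neighbours, it suffices that the rebalancing removes from any class at most half of any given vertex's out-neighbours in it; bounding the total number of moves by $t\cdot \lceil |V(D)|/t\rceil$-type estimates is wasteful, so instead I would directly partition by choosing the colouring uniformly among all \emph{balanced} colourings (those with all class sizes within $1$), for which a Chernoff-type bound still holds with the same order of magnitude (balanced sampling only concentrates the hypergeometric-type count further), and rerun the Local Lemma verbatim. The main obstacle is exactly this interaction between balance and the degree lower bound: one must pick the probability space (uniform over balanced colourings) and the Chernoff/concentration input compatible with it, rather than naively combining an unbalanced random colouring with a destructive rebalancing pass.
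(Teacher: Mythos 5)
Your first two steps (independent uniform colouring, Chernoff, Local Lemma) are fine as far as they go, but the lemma also demands $\bigl||V_i|-|V_j|\bigr|\le 1$, and this is exactly where your argument has a genuine gap. The Local Lemma only gives \emph{some} colouring avoiding the degree events; it gives no control whatsoever on the class sizes, so the subsequent rebalancing pass could have to move an enormous number of vertices and could strip a particular vertex's out-neighbourhood out of a particular class entirely --- you notice this yourself and abandon that route. Your replacement, sampling uniformly from \emph{balanced} colourings and ``rerunning the Local Lemma verbatim,'' does not work as stated: in that probability space the colours of distinct vertices are no longer independent (every vertex's colour is correlated with every other's through the global balance constraint), so the event $A_v$ is not mutually independent of the events $A_u$ for vertices $u$ at distance more than two; the hypothesis of Lemma \ref{LLL} as you invoke it simply fails, and repairing it would need a lopsided/negative-dependence version of the Local Lemma plus a hypergeometric concentration bound --- none of which is ``verbatim.'' (A smaller issue: you bound the dependency degree using $d^+(v)\le Cd$, but $(C,d)$-regularity only bounds in-degrees by $Cd$; you should first delete out-arcs to make all out-degrees exactly $d$, which preserves $(C,d)$-regularity.)

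The paper resolves the balance/independence tension with a different probability space that you should compare with: split $V(D)$ arbitrarily into blocks $U_1,\dots,U_s$ of size exactly $t$ (padding the last block with isolated fake vertices), apply an independent uniformly random permutation to each block, and let $V_j$ consist of the $j$-th element of every block. Balance is then automatic by construction (each $V_j$ gets exactly one vertex per block), and the randomness is localized in the independent permutations $\sigma_i$: the event $E_{v,j}$ is determined by the few permutations of blocks meeting $N^+(v)$, so it is mutually independent of all but at most $C(dt)^2$ other events, and $d^+(v,V_j)$ is still a sum of independent Bernoulli variables (one per block) with mean $d^+(v)/t$, so Chernoff applies exactly as in your calculation. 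In short, the missing idea is to build the balance into the random experiment via block permutations rather than trying to impose it afterwards or to condition on it globally.
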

\begin{proof}
We start with an arbitrary partition $U_1\cup \cdots \cup U_{s}$ of $V(D)$ 
where $|U_1|=\cdots=|U_{s-1}|=t$ and $1\leq |U_s|\leq t$, so that $n/t \leq s< n/t+1$. 
We add $t-|U_s|$ isolated `fake' vertices into $U_s$ to make it a set of size $t$.
We consider independent uniformly random permutations 
$\sigma_i = (\sigma_{i,1},\dots,\sigma_{i,t})$ of each $U_i$. 
Now let $V_j=\{\sigma_{1,j}, \ldots, \sigma_{s,j}\}$ for each $1\leq j\leq t$. 
We will show that $V_1\cup \cdots \cup V_t$ (with fake vertices deleted)
gives the required partition with positive probability.

We consider the random variables $X(v,j):=d^+(v,V_j)$ for each $v\in V$ and $j\in [t]$.
Note that each is a sum of independent Bernoulli random variables with $\mathbb{E}[X(v,j)]=d^+(v)/t$.
We let $E_{v,j}$ be the event that $\Big|X(v,j)-\tfrac{d^+(v)}{t}\Big|\geq \tfrac{d^+(v)}{2t}$.
Then $\mathbb{P}[E_{v,j}] \le 2e^{-\frac{d^+(v)}{12 t}}\leq 2e^{-\frac{d}{12 t}}$ by Chernoff's inequality.

Now $E_{v,j}$ is determined by those $\sigma_i$ with $U_i\cap N^+(v)\neq \emptyset$,
so is mutually independent of all but at most $C(dt)^2$ other events $E_{v',j'}$, using $\Delta^-(D)\leq Cd$.
For $c'$ sufficiently small, for example $c' \leq \frac{1}{100\log C}$, 
we get $2e^{-\frac{d}{12t}+1}(C(dt)^2+1)<1$. 
By Lemma \ref{LLL} we conclude that with positive probability no $E_{v,j}$ occurs, 
and so $V_1\cup \cdots \cup V_t$ (with fake vertices deleted) gives the required partition.
\end{proof}

\begin{proof}[Proof of Theorem \ref{ldp}]
Suppose that $D$ is a $(C,d)$-regular digraph with girth $g$.
We will show $\ell(D) \ge \frac{cdg}{\log d}$, where $c=c'/2$ with $c'$ as in Lemma \ref{par}.
As $\ell(D) \ge g(D)$ by \eqref{cyc}, we can assume $c'd/\log d \ge 1$, so $t=\floor{c'd/\log d} \ge 1$.
By  Lemma \ref{par} we can partition $V(D)$ into $V_1\cup \cdots \cup V_t$ 
such that $||V_i|-|V_j||\leq 1$ and each $d^+(v,V_j)\geq \frac{\log d}{2c'}$.
We note that $\frac{\log d}{2c'}>1$ for $c'<0.1$, say. 

Let $P_1$ be a maximal directed path in $D[V_1]$ starting from any vertex $x_1$, ending at some $y_1$.
Then $|P_1|\geq g$ by \eqref{cyc}. By choice of partition, $y_1$ has an out-neighbour $x_2$ inside $D[V_2]$. 
Similarly, we can find a maximal directed path of length at least $g$ inside $D[V_2]$ starting from $x_2$. 
We repeat the process until we find $t$ directed paths $P_1,\ldots,P_t$ of length at least $g$,
that can be connected into a directed path of length at least $tg\geq \frac{c'dg}{2\log d} =  \frac{cdg}{\log d}$. 
This completes the proof.
\end{proof}

\section{Concluding remarks}

We propose the following weaker version of Thomass\'{e}'s conjecture.
\begin{conjecture}
There is some $c>0$ such that $\ell(D) \ge c g(D)\delta^+(D)$ for any digraph $D$.
\end{conjecture}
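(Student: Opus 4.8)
The plan is to split the problem into two girth regimes. When the girth is bounded, say $g(D)\le g_0$ for a fixed constant $g_0$, the conjecture already follows from the results proved above: Theorem \ref{thm1} gives $\ell(D)\ge 2(1-\tfrac1g)\delta^+(D)\ge \delta^+(D)$ for every $g\ge 2$, while $g(D)\delta^+(D)\le g_0\,\delta^+(D)$, so taking $c=1/g_0$ yields $\ell(D)\ge c\,g(D)\delta^+(D)$ (with Theorem \ref{thm2} improving the constant for $g\in\{3,4\}$). Hence the entire content of the conjecture lies in the large-girth regime, where one must genuinely produce paths whose length grows linearly in $g$. There Theorem \ref{ldp} already achieves $\ell(D)\ge cdg/\log d$ under $(C,d)$-regularity, so the two features to remove are the regularity hypothesis and the $\log d$ loss.

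The natural engine is the partition-and-chain method of Theorem \ref{ldp}. I would aim to partition $V(D)$ into $t=\Omega(\delta)$ classes $V_1\cup\cdots\cup V_t$ with the single crucial property that every vertex has at least one out-neighbour in every class, i.e.\ $d^+(v,V_j)\ge 1$ for all $v$ and $j$. This property does double duty exactly as in Lemma \ref{par}: each $D[V_i]$ then has minimum out-degree $\ge 1$, hence contains a directed cycle and so a directed path of length $\ge g$ by \eqref{cyc}; and the endpoint of a maximal path in $D[V_i]$ has an out-neighbour in $V_{i+1}$, so the $t$ paths chain into a single path of length $\ge tg=\Omega(\delta g)$, as required. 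Thus the conjecture would follow immediately from the existence of such a partition with $t=\Omega(\delta)$.

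The heart of the matter, and the main obstacle, is producing this partition without concentration. The argument in Lemma \ref{par} obtains it only for $t=O(d/\log d)$, and it relies essentially on the bounded-in-degree hypothesis: the bad event $E_{v,j}$ (``$v$ misses class $j$'') is determined by the colours of $N^+(v)$, so two bad events interfere precisely when the corresponding vertices share an out-neighbour, and the number of such interferences is controlled by $\Delta^-(D)$. For a general digraph a single vertex may have in-degree as large as $n$, so the Lov\'asz Local Lemma dependency is uncontrolled; and pushing $t$ up to $\Omega(\delta)$ would in any case force the expected number of out-neighbours per class down to $O(1)$, where Chernoff concentration no longer guarantees $d^+(v,V_j)\ge 1$ for all $v,j$ simultaneously. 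Getting $t=\Omega(\delta)$ classes each meeting every out-neighbourhood is therefore a genuinely extremal (non-probabilistic) partition statement: it is the out-degree analogue of Stiebitz-type splitting theorems, closely related to Alon's question on splitting a digraph of large minimum out-degree into parts of prescribed minimum out-degree. A deterministic theorem of the form ``minimum out-degree $\ge f(t)$ forces a partition into $t$ classes, each meeting every out-neighbourhood'' with $f(t)=O(t)$ would resolve the conjecture, while any $f(t)=\mathrm{poly}(t)$ or even $f(t)=t\,\mathrm{polylog}(t)$ would already remove regularity from Theorem \ref{ldp}; establishing such a bound is the decisive open step.

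Two fallback routes could lower the bar. First, one may try to decouple the two uses of the partition: extract $\Omega(\delta)$ vertex-disjoint directed cycles, each of length $\ge g$ — which follows from minimum out-degree $\delta$ far more cheaply than a full partition — and then recover chainability from strong connectivity by a merging argument in the spirit of Claim \ref{clm1}, contracting each cycle to a super-vertex and routing a long walk through the quotient so as to traverse a constant fraction of each cycle; the obstacle here is controlling the vertices consumed by the connecting sub-paths, which the single-edge connections of Lemma \ref{par} avoid for free. Second, one could first pass to a subdigraph with minimum out-degree $\Omega(\delta)$ and bounded maximum in-degree and then invoke Theorem \ref{ldp} directly; whether such a near-regular dense subdigraph always exists (it fails naively, since deleting high-in-degree vertices can destroy out-degrees elsewhere) is itself an interesting question whose resolution would immediately give the conjecture up to the $\log$ factor.
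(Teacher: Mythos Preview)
The statement is an open conjecture; the paper does not prove it and has no proof to compare against. Immediately after stating it, the authors write that they ``do not even know whether it holds for regular digraphs, or whether $\ell(D)/\delta^+(D) \to \infty$ as $g \to \infty$,'' so even the qualitatively weaker assertion is left open.

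Your proposal is not a proof either, and to your credit you say so. Your bounded-girth paragraph is correct but, as you note, carries no content: Theorem~\ref{thm1} already gives $\ell(D)\ge\delta$ for every $g\ge 2$, which absorbs any finite $g_0$. In the large-girth regime your reduction is sound: a partition $V(D)=V_1\cup\cdots\cup V_t$ with $t=\Omega(\delta)$ and $d^+(v,V_j)\ge 1$ for all $v,j$ would indeed give the conjecture via the chaining argument of Theorem~\ref{ldp}. But the partition statement you isolate is at least as hard as the well-known problem, going back to Thomassen and Alon, of splitting a digraph of large minimum out-degree into two parts each of prescribed minimum out-degree; even the case $t=2$ with a linear bound on $f$ is open, and your statement asks for the much stronger property that every out-neighbourhood meets every one of $\Omega(\delta)$ classes. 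So you have traded one open problem for another of comparable or greater difficulty. Your two fallback routes are reasonable research directions, but each contains an acknowledged gap (controlling the connectors in the first; the existence of a near-regular subdigraph in the second) that is itself an open-ended question. In short: this is an honest and well-aimed research outline, not a proof, and there is no proof in the paper to hold it up against.
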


By Proposition \ref{counter}, the best possible $c$ in this conjecture satisfies $c \le 1/2$.
We do not even know whether it holds for regular digraphs,
or whether $\ell(D)/\delta^+(G) \to \infty$ as $g \to \infty$.

\medskip

\noindent \textbf{Acknowledgments.}
We are grateful to Ant\'{o}nio Gir\~{a}o for helpful discussions.

\bibliographystyle{plain}
\bibliography{Bibte}

\end{document}